\theoremstyle{plain}
\newtheorem{thm}{Theorem}[section]
\theoremstyle{plain}
\newtheorem{lem}[thm]{Lemma}
\theoremstyle{definition}
\newtheorem{defi}{Definition}[section]
\newtheorem*{maintheorem*}{Main Theorem}
\newenvironment{Assumptions}
{
\setcounter{enumi}{0}

\begin{enumerate}}
{\end{enumerate} }
\newcommand{\R}{\ensuremath{\mathbb{R}}}
\newcommand{\goto}{\ensuremath{\rightarrow}}
\newcommand{\grad}{\ensuremath{\nabla}}
\numberwithin{equation}{section} \allowdisplaybreaks
\title[Space-Time Discretization of the Stochastic Allen-Cahn Equation]{ Optimal Strong Rates of Convergence for a Space-Time Discretization of
the Stochastic Allen-Cahn Equation with multiplicative noise}
\date{}
\subjclass[2000]{45K05, 46S50, 49L20, 49L25, 91A23, 93E20}
\keywords{ Stochastic Allen-Cahn equation,  monotone operator, variational solution, strong rate of convergence.
}
\author[Ananta K.\ Majee]{Ananta K.\ Majee}
\address[Ananta K.\ Majee]{\newline
Mathematisches Institut
Universit\"{a}t T\"{u}bingen
Auf der Morgenstelle 10
D-72076 T\"{u}bingen, Germany }
\email[]{majee@na.uni-tuebingen.de}
\author[Andreas Prohl]{ Andreas Prohl}
\address[Andreas Prohl]{\newline
Mathematisches Institut
Universit\"{a}t T\"{u}bingen
Auf der Morgenstelle 10
D-72076 T\"{u}bingen, Germany}
\email[]{dunst@na.uni-tuebingen.de}
\thanks{}
\begin{document}
\begin{abstract}
The stochastic Allen-Cahn equation with multiplicative noise involves
the nonlinear drift operator ${\mathscr A}(x) = \Delta x - \bigl(\vert x\vert^2 -1\bigr)x$. We use the fact that ${\mathscr A}(x) = -{\mathcal J}^{\prime}(x)$
satisfies a weak monotonicity property to deduce uniform bounds
in strong norms for solutions of the temporal, as well as of the spatio-temporal discretization of the problem. This weak monotonicity property then allows
for the estimate $ \underset{1 \leq j \leq J}\sup {\mathbb E}\bigl[ \Vert X_{t_j} - Y^j\Vert_{{\mathbb L}^2}^2\bigr] \leq C_{\delta}(k^{1-\delta} + h^2)$ for all small $\delta>0$, where
$X$ is the strong variational solution of the stochastic Allen-Cahn equation, while $\big\{Y^j:0\le j\le J\big\}$ solves a structure preserving finite element based space-time discretization
of the problem on a temporal mesh $\{ t_j;\, 1 \leq j \leq J\}$ of size $k>0$ which covers $[0,T]$.
\end{abstract}

\maketitle

\section{Introduction}
 Let $\bigl( {\mathbb H}, (\cdot, \cdot)_{{\mathbb H}}\bigr)$ be a separable Hilbert space, and ${\mathbb V}$ be a reflexive
Banach space, such that ${\mathbb V} \hookrightarrow {\mathbb H} \hookrightarrow {\mathbb V}'$ constitutes a Gelfand triple. The main motivation for
this work is to identify the structural properties for the drift operator of the nonlinear SPDE
\begin{equation}\label{eq:spde}
{\rm d}X_t = {\mathscr A}(X_t){\rm d}t + \sigma(X_t){\rm d}W_t \quad (t > 0)\,, \qquad 
X(0) = x \in {\mathbb H}\,,
\end{equation}
which allow to construct a space-time discretization of \eqref{eq:spde} for which optimal strong rates of convergence may be shown.
Relevant works in this direction are \cite{GM0,GM1}, where both, $\sigma$ and ${\mathscr A}$ are required to be Lipschitz. The Lipschitz assumption for the drift
operator ${\mathscr A}: {\mathbb V} \rightarrow {\mathbb V}'$ does not hold for many nonlinear SPDEs including the stochastic Navier-Stokes equation, or
the stochastic version of general phase field models (including \eqref{eq:Allen-Cahn}) below for example. A usual strategy for a related numerical analysis is then to truncate nonlinearities (see e.g.~\cite{PR0}), or to quantify the 
mean square error on large subsets $\Omega_{k,h} := \Omega_k \cap \Omega_h \subset \Omega$. As an example, the following estimate for a (time-implicit, finite element based) space-time 
discretization of the 2D stochastic Navier-Stokes equation with solution
$\{ {\bf U}^m;\, m \geq 0\}$ was obtained in \cite{CP1},
\begin{equation*}
{\mathbb E}\Bigl[ \chi_{\Omega_{k,h}}  \max_{1 \leq m \leq M} \Vert {\bf u}(t_m)- {\bf U}^m\Vert^2_{{\mathbb L}^2}
\Bigr] \leq C \bigl( k^{\eta - \varepsilon} + k h^{-\varepsilon} + h^{2-\varepsilon}\bigr) \qquad (\varepsilon > 0)
\end{equation*}
for all $\eta \in (0,\frac{1}{2})$, where $\Omega_k \subset \Omega$ (resp.~$\Omega_h \subset \Omega$) is such that ${\mathbb P}[\Omega \setminus \Omega_k] \rightarrow 0$ 
for $k \rightarrow 0$ (resp.~${\mathbb P}[\Omega \setminus \Omega_h] \rightarrow 0$ for $h \rightarrow 0$). We also mention the work \cite{KLM1} which studies a spatial discretization of the
stochastic Cahn-Hilliard equation. 
\vspace{.2cm}

Let ${\mathscr O} \subset {\mathbb R}^d$, $d \in \{1,2,3\}$ be a bounded Lipschitz domain. We consider the stochastic Allen-Cahn equation with multiplicative noise, where
the process $X: \Omega \times [0,T] \times \overline{\mathscr{O}} \rightarrow {\mathbb R}$ solves 
\begin{equation}\label{eq:Allen-Cahn}
{\rm d}X_t - \Bigl(\Delta X_t - \bigl(\vert X_t\vert^2-1\bigr) X_t\Bigr){\rm d}t  = \sigma(X_t){\rm d}W_t \quad (t > 0)\,, 
\qquad X_0 = x,
\end{equation}
where $W\equiv \{W_t; 0\le t\le T\}$ is an ${\mathbb R}$-valued Wiener process which is defined on the given filtered probability
space ${\mathfrak P} \equiv (\Omega, {\mathcal F}, {\mathbb F}, {\mathbb P})$; however it is easily possible to generalize the analysis below to a
trace class 
$Q$- Wiener process. Obviously, the drift operator ${\mathscr A}(y) = \Delta y - (\vert y\vert^2-1)y$ is only locally Lipschitz, but is the negative
 G\^{a}teaux differential of $\mathcal{J}(y) = \frac{1}{2} \Vert \nabla y\Vert^2_{{\mathbb L}^2} + \frac{1}{4} \||y|^2 - 1\|^2_{{\mathbb L}^2}$ and satisfies the weak monotonicity property
 \begin{align}
 \bigl\langle {\mathscr A}(y_1) - {\mathscr A}(y_2), y_1- y_2\bigr\rangle_{({\mathbb W}^{1,2}_{{\rm per}})^* \times {\mathbb W}^{1,2}_{{\rm per}}} \leq 
 K\, \|y_1 - y_2 \|^2_{{\mathbb L}^2} - \|\grad(y_1-y_2)\|_{\mathbb{L}^2}^2 \,\,\, \forall\, y_1, y_2 \in {\mathbb W}^{1,2}_{{\rm per}} \label{AD1}
 \end{align}
 for some $K > 0$; see Section~\ref{sec:tech-framework} for the notation. Our goal is a (variational) error analysis for the structure preserving finite element based space-time discretization
 \eqref{eq:finite-element-discretization} which accounts for this structural property, avoiding arguments that exploit only the locally Lipschitz property of $\mathscr{A}$ to arrive at optimal strong 
 error estimate. 
 \vspace{.1cm}
 
 The existing literature (see e.g.~\cite{KLM1}) for estimating the numerical strong error on the problem \eqref{eq:Allen-Cahn} mainly uses the involved linear semigroup theory; the authors have
 considered the additive colored noise case, in which they have benefited from it by using the stochastic convolution, and then used a
 truncation of the nonlinear drift operator $\mathscr{A}$ to prove a {\em rate of convergence} for a (spatial) semi-discretization
 on sets of probability close to $1$ without exploiting the weak monotonicity of $\mathscr{A}$. In contrast, property \eqref{AD1} and variational arguments were used in 
  the recent work \cite{prohl}, where strong error estimates for both, semi-discrete (in time) and fully-discrete schemes for \eqref{eq:Allen-Cahn} were obtained, which are of sub-optimal 
 order $\mathcal{O}(\sqrt{k} + h^\frac{2-\delta}{6})$ for the fully discrete scheme in the case $d=3$. In \cite{prohl}, a standard implicit discretization of \eqref{eq:Allen-Cahn} was considered for which it is not 
 clear to obtain uniform bounds for arbitrary higher moments of the solution of the fully discrete scheme, thus leading to sub-optimal convergence rates above. In this work, we consider
 the modified scheme \eqref{eq:finite-element-discretization} for \eqref{eq:Allen-Cahn}, and derive optimal strong numerical error estimate. 
 \vspace{.1cm}
 
 The subsequent analysis for the scheme \eqref{eq:finite-element-discretization} is split into two steps to independently address errors due to the temporal and spatial discretization. First we 
  exploit the variational solution concept for \eqref{eq:Allen-Cahn} and the semi-linear structure of ${\mathscr A}(y) = -{\mathcal J}^{\prime}(y)$
  to derive uniform bounds for the arbitrarily higher moments of the solution of \eqref{eq:Allen-Cahn} in strong norms; these bounds may then be used to bound increments of the solution of 
  \eqref{eq:Allen-Cahn} in Lemma~\ref{lem:esti-time-difference-strong-solun}. The second ingradient to achieve optimal error bounds is a temporal discretization which inherits the structural properties of 
  \eqref{eq:Allen-Cahn}; the scheme \eqref{eq:time-discrete} is constructed to allow for bounds of arbitrary moments of $\{ \mathcal{J}(X^j); 0\le j\le J\}$ in Lemma ~\ref{lem:moment-time-discrete}, which then 
  settles the error bounds in Theorem~\ref{thm:error-discrete in time} by using property \eqref{AD1} to effectively handle the nonlinear terms.
  We recover the asymptotic rate $\frac{1}{2}$ which is known for SPDEs of the form \eqref{eq:spde} when ${\mathscr A}$ is linear elliptic.
It is interesting to compare the present error analysis for the SPDE \eqref{eq:Allen-Cahn} with the one in \cite{HMS1} for a general SODE with polynomial
drift (see \cite[Ass.s 3.1, 4.1, 4.2]{HMS1}) which also exploits the weak monotonicity of the drift.
\vspace{.1cm}

The temporal semi-discretization was studied as a first step rather than spatial discretization to inherit bounds in strong norms which are needed for a complete error analysis of the problem. 
The second part of the error analysis is then on the structure preserving finite element based fully discrete scheme \eqref{eq:finite-element-discretization}, for which we first verify the
uniform bounds of arbitrary moments of $\{\mathcal{J}(Y^j); 0\le j\le J\}$ (cf.~ Lemma \ref{lem:moment-space-time-discrete}). It is worth mentioning that, if $\big\{Y^j:0\le j\le J\}$ is
a solution to a {\em standard} space-time  
 discretization which involves the nonlinearity $\mathscr{A}(Y^j)=-{\mathcal J}^{\prime}(Y^j)$, then only basic uniform
 bounds may be obtained (see \cite[Lemma $2.5$]{prohl}), as opposed to those in Lemma~\ref{lem:moment-space-time-discrete}. Next to it, we use again \eqref{AD1} for the drift, in combination with well-known approximation results 
 for a finite element discretization to show that the error part due to spatial discretization is of order $\mathcal{O}(\sqrt{k} + h)$
 where $k>0$ is the time discretization parameter and $h>0$ is the space discretization parameter (see Theorem~\ref{thm:error-estime}). In this context, we mention the numerical analysis
 in \cite{SS1} for an extended model of \eqref{eq:Allen-Cahn}, where the uniform bounds for the
exponential moments next to arbitrary moments in stronger norms are obtained for the solution of a semi-discretization in space
in the case $d=1$ (see \cite[Prop.s 4.2, 4.3]{SS1}); those bounds, together with a monotonicity argument are then used to properly address the nonlinear
effects in the error analysis and arrive at the (lower) strong rate $\frac{1}{2}$ for the $p$-th mean convergence of the numerical solution.
\vspace{.1cm}
%
 \section{Technical framework and main result} \label{sec:tech-framework}
 Throughout this paper, we use the letter $C > 0$ to denote various generic constants. Let ${\mathscr O} \equiv (0,R)^d$, $1 \leq d \leq 3$, with $R \in (0,\infty)$ be
a cube in ${\mathbb R}^d$. Let us denote $\Gamma_j = \partial {\mathscr O} \cap \{ x_j = 0\}$ and
$\Gamma_{j + d} = \partial {\mathscr O} \cap \{x_j = R \}$ for $j = 1, \ldots, d$. The problem \eqref{eq:Allen-Cahn} is then supplemented
by the space-periodic boundary condition
$$ X \bigl\vert_{\Gamma_j} = X \bigl\vert_{\Gamma_{j+d}} \qquad (1 \leq j \leq d)\, .$$
Let $\bigl({\mathbb L}^p_{\rm per}, \Vert \cdot \Vert_{{\mathbb L}^{p}}\bigr)$ resp.~$\bigl({\mathbb W}^{m,p}_{\rm per}, \Vert \cdot \Vert_{{\mathbb W}^{m,p}}\bigr)$ denote the Lebesgue resp.~Sobolev space of
$R$-periodic functions $\varphi \in {\mathbb W}^{m,p}_{\rm loc}({\mathbb R}^d)$. Recall that functions in ${\mathbb W}^{m,2}_{{\rm per}}$ may be characterized by
their Fourier series expansion, i.e.,
\begin{eqnarray*}
{\mathbb W}^{m,2}_{{\rm per}}({\mathcal O}) &=& \Bigl\{ \varphi: {\mathbb R}^d \rightarrow {\mathbb R}: \varphi(x) =
\sum_{k \in {\mathbb Z}^d } c_k \exp \Bigl( 2i \pi \frac{\langle k, x\rangle}{R}\Bigr)\,, \\
&& \qquad \qquad  \overline{c}_k = c_{-k}\,, \ \sum_{k \in {\mathbb Z}^d } \vert k\vert^{2m}
\vert c_k\vert^2 < \infty \Bigr\}\, .
\end{eqnarray*}
Below, we set $\psi(x) = \frac{1}{4} \| |x|^2-1\|_{\mathbb{L}^2}^2$ for $x\in \mathbb{L}^2$. Throughout this article, we make the following assumption on $\sigma: \R\goto \R$.

\begin{Assumptions}
 \item \label{A1} $\sigma(0)=0$, and $\sigma,\, \sigma^{\prime},\, \sigma^{\prime\prime}$ are bounded. Moreover, $\sigma$ is Lipschitz continuous, i.e., there exists a constant $K_1 > 0$ such that
\begin{equation}\label{sac-1b}
 \Vert \sigma(u) - \sigma(v)\Vert^2_{{\mathbb L}^2} \leq \, K_1\, \Vert u-v\Vert^2_{{\mathbb L}^2}
\quad\forall\, u,v \in {\mathbb L}^2_{\rm per}\, .
\end{equation}
\end{Assumptions}
\begin{defi}(Strong variational solution)
Fix $T \in (0,\infty)$, and $x \in {\mathbb L}^2_{\rm per}$. A $\mathbb{W}^{1,2}_{\rm per}$-valued ${\mathbb F}$-adapted stochastic process $X \equiv \{ X_t;\, t \in [0,T]\}$ is called a strong
variational solution of \eqref{eq:Allen-Cahn}
if $X \in L^2\bigl( \Omega; C([0,T]; {\mathbb L}^{2}_{\rm per})\bigr)$ satisfies ${\mathbb P}$-a.s.~for all $t \in [0,T]$ that
\begin{align}
& \big( X_t, \phi\big)_{\mathbb{L}^2} + \int_0^t \Big\{\big( \nabla X_s, \nabla \phi\big)_{\mathbb{L}^2} + \big( D\psi(X_s), \phi\big)_{\mathbb{L}^2}\Big\} \, {\rm d}s \notag \\
 &\quad = \big( x, \phi\big)_{\mathbb{L}^2} + \int_0^t\big( \sigma(X_s), \phi\big)_{\mathbb{L}^2}\, {\rm d}W_s \quad \forall\, \phi \in {\mathbb W}^{1,2}_{\rm per}. \label{eq:variational-Allen-Cahn}
\end{align}
\end{defi}

The following estimate for the strong solution is well-known ($p \ge 1$),
\begin{equation}\label{esti:higher-moment-in-l2-strong-solun}
\sup_{t \in [0,T]} {\mathbb E}\Bigl[ \frac{1}{p} \Bigl(\Vert X_t\Vert^p_{{\mathbb L}^2} - \Vert x \Vert^p_{{\mathbb L}^2}\Bigr) + \int_0^T \Vert X_s\Vert^{p-2}_{{\mathbb L}^2} 
\Bigl(\Vert \nabla X_s\Vert^2_{{\mathbb L}^2} + \Vert X_s\Vert^4_{{\mathbb L}^4}\Bigr)\, {\rm d}s
\Bigr] \leq C\, .
\end{equation}

\subsection{Fully discrete scheme} 
Let us introduce some notation needed to define the structure preserving finite element based fully discrete scheme. Let $0 = t_0 < t_1 < \ldots < t_J$ be a uniform partition
of $[0,T]$ of size $k = \frac{T}{J}$. Let ${\mathscr T}_h$ be a quasi-uniform triangulation of the domain ${\mathscr O}$. We
consider the ${\mathbb W}^{1,2}_{{\rm per}}$-conforming finite element space (cf.~\cite{BS1}) $\mathbb{V}_h\subset {\mathbb W}^{1,2}_{{\rm per}} $ such that 
$$ {\mathbb V}_h = \bigl\{ \phi \in C(\overline{\mathscr{O}};\R);\, \phi \bigl\vert_K \in {\mathscr P}_1(K) \quad
\forall\, K \in {\mathscr T}_h\bigr\},$$  where $\mathscr{P}_1(K)$ is the space of $\R$-valued functions on $K$ which are polynomials of degree less or
equal to $1$. We may then consider the space-time discretization of \eqref{eq:Allen-Cahn}: Let $Y^0 = {\mathscr P}_{{\mathbb L}^2}x \in {\mathbb V}_h$,
where ${\mathscr P}_{{\mathbb L}^2}: {\mathbb L}^2_{{\rm per}} \rightarrow {\mathbb V}_{h}$ denotes the ${\mathbb L}^2_{{\rm per}}$-orthogonal
projection, i.e., for all $g\in {\mathbb L}^{2}_{{\rm per}}$ 
\begin{align}
 \big( g- \mathscr{P}_{\mathbb{L}^2}g, \phi \big)_{\mathbb{L}^2}=0 \quad \forall \phi \in \mathbb{V}_h. \label{defi:l2-projection}
\end{align}
Find the $\{{\mathcal F}_{t_j};\, 0 \leq j \leq J\}$-adapted ${\mathbb V}_h$-valued process $\{ Y^j;\, 0 \leq j \leq J\}$ such that ${\mathbb P}$-almost surely
\begin{align}
&\bigl(Y^{j} - Y^{j-1}, \phi \bigr)_{{\mathbb L}^2} + k \Bigl[\bigl( \nabla Y^j , \nabla \phi \bigr)_{{\mathbb L}^2} + 
\bigl(f(Y^j, Y^{j-1}),\phi\bigr)_{{\mathbb L}^2}\Bigr]
 =  \Delta_j W \bigl( \sigma(Y^{j-1}), \phi\bigr)_{{\mathbb L}^2}\,\,\, \forall\, \phi \in {\mathbb V}_h, \label{eq:finite-element-discretization}
\end{align}
where 
 \begin{align}
  \Delta_j W:= W(t_j)- W(t_{j-1}) \sim \mathcal{N}(0,k),\,\,\,\text{and}\,\,\, f(y,z)= \big(|y|^2-1\big)\frac{y + z}{2}. \label{defi:brownian-increment-function-mixed}
 \end{align}
 Solvability for $k < 1$ is again immediate via Brouwer fixed point theorem. 

We are now in a position to state the main result of this article.
\begin{maintheorem*}
 Let the assumption \ref{A1} hold and $x\in {\mathbb W}^{2,2}_{{\rm per}}$. For every $\delta>0$, there exist constants $0< C_\delta < \infty$, independent of the discretized parameters
 $k, h >0$, and $k_0=k_0(T,x)>0$ such that for all $k\le k_0$ sufficiently small, there holds
 \begin{align}
  \sup_{1\le j\le J} \mathbb{E}\Big[ \|X_{t_j}-Y^j\|_{\mathbb{L}^2}^2\Big] + k\sum_{j=1}^J \mathbb{E}\Big[\|\grad(X_{t_j}-Y^j)\|_{\mathbb{L}^2}^2 \le C_\delta \big(k^{1-\delta} + h^2\big),\notag 
 \end{align}
 where $\big\{ X_t; t\in [0,T]\big\}$ solves \eqref{eq:Allen-Cahn} while $\big\{ Y^j; 0\le j\le J\big\}$ solves \eqref{eq:finite-element-discretization}.
\end{maintheorem*}
The proof is detailed in Sections \ref{sec:continuous-case}, \ref{sec:time-discrete} and \ref{sec:fully-discrete} and uses the semi-linear structure of $\mathscr{A}$ along with the weak monotonicity 
property \eqref{AD1}. We first consider a semi-discrete (in time) scheme \eqref{eq:time-discrete} of the problem \eqref{eq:Allen-Cahn} and derive the error estimate between the strong solution 
$X$ of \eqref{eq:Allen-Cahn} and the discretized solution $\big\{X^j;0\le j\le J\big\}$ of \eqref{eq:time-discrete}, see Theorem~\ref{thm:error-discrete in time}. Again, using 
uniform bounds for higher moments of the solutions $\{X^j\}$ and $\{Y^j\}$, we derive the error estimate of $X^j$ and $Y^j$ in strong norm, cf.~ Theorem \ref{thm:error-estime}. Putting things together then settles the main theorem. 
\section{Stochastic Allen-Cahn equation: The continuous case} \label{sec:continuous-case}
In this section, we derive uniform bounds of arbitrary moments for the strong solution $X\equiv \big\{X_t: t\in [0,T]\big\}$ of \eqref{eq:Allen-Cahn} and using these uniform bounds, we estimate the expectation of
the increment $\|X_t-X_s\|_{\mathbb{L}^2}^2$ in terms of $|t-s|$. 
\vspace{.1cm}

The following estimate may be shown by a standard Galerkin method which employs a (finite) sequence of
(${\mathbb W}^{1,2}_{\rm per}$-orthonormal) eigenfunctions $\{ w_j;\, 1 \leq j \leq N\}$ of the inversely compact, self-adjoint isomorphic operator
${\rm I} -\Delta: {\mathbb W}^{2,2}_{\rm per} \rightarrow {\mathbb L}^2_{\rm per}$, the use of It\^{o}'s formula to the functional 
$y\mapsto \mathcal{J}(y):=\frac{1}{2}\|\grad y\|_{\mathbb{L}^2}^2 + \psi(y)$, and the final
passage to the limit (see e.g.~\cite{gess1}),
\begin{align}
&\sup_{t \in [0,T]} {\mathbb E}\Bigl[ \mathcal{J}(X_t) + \int_0^t \Vert 
\Delta X_s - D\psi(X_s)\Vert^2_{{\mathbb L}^2}\, {\rm d}s\Bigr] \leq C \Bigl( 1+ {\mathbb E}\bigl[ \mathcal{J}(x)\bigr]\Bigr), \label{esti:w12-norm-strong-solun}
\end{align}
for which we require the improved regularity property $x \in {\mathbb W}^{1,2}_{\rm per}$. Thanks to the assumption \ref{A1}, we see that $\sigma$ satisfies 
 the following estimates: 
 \begin{itemize}
  \item [a)] There exists a constant $C>0$ such that 
  \begin{align}
   \Vert \nabla \sigma(\xi)\Vert^2_{{\mathbb L}^2} + \Bigl( D^2 \psi(\xi) \sigma(\xi), \sigma(\xi)\Bigr)_{{\mathbb L}^2} \leq
C\big(1 + \mathcal{J}(\xi)\big)\quad  \forall \xi \in {\mathbb W}^{1,2}_{\rm per}.\label{esti:assumption-1}
  \end{align}
  \item[b)] There exist constants $K_2, K_3, K_4 >0$ and $L_2, L_3, L_4 >0$ such that  for all $\xi\in {\mathbb W}^{2,2}_{\rm per} $
  \begin{align}\label{esti:assumption-2}
   \|\Delta \sigma(\xi)\|_{\mathbb{L}^2}^2 \le 
   \begin{cases}
    K_2 \|\Delta \xi\|_{\mathbb{L}^2}^2 + K_3 \|\grad \xi\|_{\mathbb{L}^2}^2 \|\Delta \xi\|_{\mathbb{L}^2}^2 + K_4 \|\grad \xi\|_{\mathbb{L}^2}^4 \quad \text{if}\,\,\,d=2 \\
    L_2\|\Delta \xi\|_{\mathbb{L}^2}^2 + L_3 \|\grad \Delta \xi\|_{\mathbb{L}^2}^{\frac{3}{2}} \|\grad \xi\|_{\mathbb{L}^2}^{\frac{5}{2}} + L_4 \|\grad \xi\|_{\mathbb{L}^2}^4
    \quad \text{if}\,\,\,d=3.
   \end{cases}
  \end{align}
 \end{itemize}

\begin{lem}\label{lem:higer-moment-functional-strong-solun}
Suppose that the assumption \ref{A1} holds and $p \in {\mathbb N}$. Then, there exists a constant $C \equiv C(\| x\|_{{\mathbb W}^{1,2}}, p,T) >0$ 
such that
\begin{itemize}
\item[(i)] \quad $\underset{t \in [0,T]}\sup {\mathbb E}\Bigl[ \Bigl(\mathcal{J}(X_t)\Bigr)^p\Bigr] + {\mathbb E}\Bigl[ \int_0^T  \Vert \nabla X_s\Vert^{2(p-1)}_{{\mathbb L}^2} \Vert \Delta X_s \Vert^2_{{\mathbb L}^2}\, {\rm d}s\Bigr] \leq C\, .$
\end{itemize}
Suppose in addition $x \in {\mathbb W}^{2,2}_{\rm per}$. Then, there exists a constant
$C \equiv C(\Vert x\Vert_{{\mathbb W}^{2,2}},T) >0$ such that
\begin{itemize}
\item[(ii)] \quad $\underset{t \in [0,T]}\sup {\mathbb E}\Bigl[  \Vert \Delta X_t\Vert^{2}_{{\mathbb L}^{2}}
\Bigr] +  {\mathbb E}\Bigl[ \int_0^T \Vert \nabla \Delta X_s \Vert^2_{{\mathbb L}^{2}}\, {\rm d}s\Bigr] \leq C\, .$
\end{itemize}
\end{lem}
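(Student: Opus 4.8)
The plan is to prove both estimates by applying It\^o's formula to suitable powers of the energy functional and to the squared Laplacian norm, respectively, carrying out the computation at the level of the Galerkin approximation used for \eqref{esti:w12-norm-strong-solun} (where It\^o's formula is classical and the local-martingale terms vanish in expectation after localization by a stopping time), and then passing to the limit via weak lower semicontinuity and Fatou's lemma. Throughout I abbreviate $F_s := \mathcal{J}(X_s) = \frac12\|\nabla X_s\|^2_{{\mathbb L}^2} + \psi(X_s)$ and use $\mathcal{J}'(X) = -\Delta X + D\psi(X)$ with $D\psi(X) = (X^2-1)X$, so that the drift contributes the dissipative term $-\|\mathcal{J}'(X)\|^2_{{\mathbb L}^2} = -\|\Delta X - D\psi(X)\|^2_{{\mathbb L}^2}$.

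For part (i) I would apply It\^o's formula to $t\mapsto F_t^p$. Since $\mathrm{d}X = \mathscr{A}(X)\,\mathrm{d}t + \sigma(X)\,\mathrm{d}W = -\mathcal{J}'(X)\,\mathrm{d}t + \sigma(X)\,\mathrm{d}W$, one obtains
\begin{align*}
\mathrm{d}F_t^p = p F_t^{p-1}\Big(-\|\mathcal{J}'(X_t)\|^2_{{\mathbb L}^2} + \tfrac12 R_t\Big)\,\mathrm{d}t + \tfrac{p(p-1)}{2}F_t^{p-2}\big(\mathcal{J}'(X_t),\sigma(X_t)\big)^2_{{\mathbb L}^2}\,\mathrm{d}t + \mathrm{d}M_t,
\end{align*}
where $R_t = \|\nabla\sigma(X_t)\|^2_{{\mathbb L}^2} + (D^2\psi(X_t)\sigma(X_t),\sigma(X_t))_{{\mathbb L}^2} \le C(1+F_t)$ by \eqref{esti:assumption-1}, and $M$ is a local martingale. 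Integration by parts gives the pointwise identity $\|\mathcal{J}'(X)\|^2_{{\mathbb L}^2} = \|\Delta X\|^2_{{\mathbb L}^2} + 6\|X\nabla X\|^2_{{\mathbb L}^2} - 2\|\nabla X\|^2_{{\mathbb L}^2} + \|D\psi(X)\|^2_{{\mathbb L}^2} \ge \|\Delta X\|^2_{{\mathbb L}^2} - 2\|\nabla X\|^2_{{\mathbb L}^2}$, which, combined with $F_t^{p-1}\ge 2^{-(p-1)}\|\nabla X_t\|^{2(p-1)}_{{\mathbb L}^2}$, produces the second term on the left-hand side of (i), the remainder $2F_t^{p-1}\|\nabla X_t\|^2_{{\mathbb L}^2}\le 4F_t^p$ being moved to the right. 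For the quadratic-variation term I would decompose $(\mathcal{J}'(X),\sigma(X))_{{\mathbb L}^2} = (\nabla X,\nabla\sigma(X))_{{\mathbb L}^2} + (D\psi(X),\sigma(X))_{{\mathbb L}^2}$, avoiding $\|\mathcal{J}'\|$ entirely, and bound each piece by powers of $F_t$ using \eqref{esti:assumption-1}, the boundedness of $\sigma$, and the control of $\|X\|^4_{{\mathbb L}^4}$ by $\mathcal{J}(X)$ and $\|X\|_{{\mathbb L}^2}$ (the latter via \eqref{esti:higher-moment-in-l2-strong-solun}); this yields $F_t^{p-2}(\mathcal{J}'(X),\sigma(X))^2 \le C(F_t^{p-2}+F_t^{p-1}+F_t^p)$. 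Taking expectations, the martingale drops out, and Gronwall's inequality closes (i).

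For part (ii), assuming $x\in{\mathbb W}^{2,2}_{\rm per}$ so that $\|\Delta X_0\|_{{\mathbb L}^2}<\infty$, I would apply It\^o's formula to $t\mapsto \frac12\|\Delta X_t\|^2_{{\mathbb L}^2}$. The identity $(\Delta^2 X,\Delta X)_{{\mathbb L}^2} = -\|\nabla\Delta X\|^2_{{\mathbb L}^2}$ supplies the desired dissipative term on the left, while the drift contributes $(\nabla\Delta X, (3X^2-1)\nabla X)_{{\mathbb L}^2}$ and the noise contributes $\frac12\|\Delta\sigma(X)\|^2_{{\mathbb L}^2}$. Both are controlled by Gagliardo--Nirenberg interpolation together with \eqref{esti:assumption-2}: a Young inequality absorbs a small multiple of $\|\nabla\Delta X\|^2_{{\mathbb L}^2}$ into the dissipative term at the cost of high powers of $\|\nabla X\|^2_{{\mathbb L}^2}\le 2F$, as in the $d=3$ term $\|\nabla\Delta X\|^{3/2}_{{\mathbb L}^2}\|\nabla X\|^{5/2}_{{\mathbb L}^2}\le \epsilon\|\nabla\Delta X\|^2_{{\mathbb L}^2} + C_\epsilon\|\nabla X\|^{10}_{{\mathbb L}^2}$ with $\|\nabla X\|^{10}_{{\mathbb L}^2}\le C F^5$. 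After taking expectations the surviving terms are integrable in time by part (i), and Gronwall's inequality yields (ii).

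The main obstacle is the treatment of the cubic nonlinearity and of $\|\Delta\sigma(X)\|^2_{{\mathbb L}^2}$ under two spatial derivatives in the critical dimension $d=3$: the relevant Sobolev embedding is borderline, so one cannot simply bound through $\|X\|_{{\mathbb L}^\infty}$, and must instead interpolate so that the top-order norm $\|\nabla\Delta X\|_{{\mathbb L}^2}$ enters with a subcritical power absorbable by Young's inequality, paying with high powers of $\|\nabla X\|_{{\mathbb L}^2}\sim\mathcal{J}(X)^{1/2}$. This is exactly the mechanism forcing part (i) to be established for \emph{all} $p\in{\mathbb N}$ before part (ii) can be closed. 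A secondary technical point is the rigorous justification of the It\^o computations and the vanishing of the martingale terms, handled by working on the finite-dimensional Galerkin system, localizing with stopping times, and passing to the limit.
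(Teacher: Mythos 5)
Your proposal is correct and follows essentially the same route as the paper: It\^{o}'s formula applied to $\bigl(\mathcal{J}(X_t)\bigr)^p$ for (i) and to $\tfrac12\Vert \Delta X_t\Vert^2_{{\mathbb L}^2}$ for (ii), justified at the Galerkin level, closed by Gronwall, with the moment bounds of (i) feeding the absorption arguments (via Young's inequality and \eqref{esti:assumption-2}) in (ii). Your tactical variants are equally valid but not a different strategy: in (i) you extract $\Vert \Delta X\Vert^2_{{\mathbb L}^2}$ via the pointwise expansion of $\Vert \mathcal{J}'(X)\Vert^2_{{\mathbb L}^2}$ and bound the cross term $(\mathcal{J}'(X),\sigma(X))_{{\mathbb L}^2}$ by an integration-by-parts decomposition (the paper instead uses Cauchy--Schwarz, a small-$\theta$ Young absorption, and \eqref{esti:w12-norm-strong-solun}), and in (ii) you absorb the cubic term into the $\Vert \nabla\Delta X\Vert^2_{{\mathbb L}^2}$ dissipation at the cost of high powers of $\mathcal{J}$ (the paper instead invokes its integrated bound on $\mathcal{J}^2\Vert \Delta X\Vert^2_{{\mathbb L}^2}$ from part (i)).
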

\begin{proof} {\rm (i)} We proceed formally. Note that 
\begin{align*}
\mathcal{J}(\xi):=\frac{1}{2} \Vert \nabla \xi \Vert^2_{{\mathbb L}^2} + \psi(\xi) \qquad \text{and} \qquad -\mathscr{A}(\xi)\equiv \mathcal{J}^{\prime}(\xi)= - \Delta \xi + D\psi(\xi).
\end{align*}
We use It\^o's formula for  
$\xi \mapsto g(\xi) := \bigl(\mathcal{J}(\xi) \bigr)^{p}$. 
\begin{align}
D g(\xi) &=  -p \, \bigl( \mathcal{J}(\xi) \bigr)^{p-1} \mathscr{A}(\xi) \notag \\
D^2 g(\xi) &= p(p-1)  \, \bigl( \mathcal{J}(\xi) \bigr)^{p-2} \mathscr{A}(\xi) \otimes \mathscr{A}(\xi)
 + p \,  \bigl( \mathcal{J}(\xi) \bigr)^{p-1}  \bigl( -\Delta + D^2 \psi(\xi)\bigr)\, ,\nonumber
\end{align}
where $a \otimes b \cdot c = a (b,c)_{{\mathbb L}^2}$ for all $a,b,c \in {\mathbb L}^2$. By Cauchy-Schwartz inequality, and \eqref{esti:assumption-1}, we have
\begin{eqnarray*}\nonumber
&&{\mathbb E} \Bigl[ \bigl(\mathcal{J}(X_t)\bigr)^p - \bigl(\mathcal{J}(x)\bigr)^p 
+ p \int_0^t \bigl(\mathcal{J}(X_s)\bigr)^{p-1} \Vert \mathscr{A}(X_s)\Vert^2_{{\mathbb L}^2}\, {\rm d}s \Bigr]\\ \nonumber
&& = \frac{p}{2} {\mathbb E}\Biggl[ \int_0^t \Big\{ (p-1) \bigl( \mathcal{J}(X_s) \bigr)^{p-2}
\Bigl( \mathscr{A}(X_s) , \sigma(X_s)\Bigr)^2_{{\mathbb L}^2} \\ \nonumber 
&& \quad \qquad \qquad +  \bigl( \mathcal{J}(X_s)\bigr)^{p-1} \Bigl( [-\Delta + D^2 \psi(X_s)] \sigma(X_s), \sigma(X_s)\Bigr)_{{\mathbb L}^2}\Big\}\, {\rm d}s \Biggr] 
\\ \nonumber
&& \leq  \frac{p}{2}  {\mathbb E}  
\Big[ \int_0^t  (p-1)  \bigl( \mathcal{J}(X_s)\bigr)^{p-2} \Bigl( \mathscr{A}(X_s) , \sigma(X_s)\Bigr)^2_{{\mathbb L}^2}\,{\rm d}s\Big]
 + C(p)\int_0^t \mathbb{E}\Big[ \bigl( \mathcal{J}(X_s)\bigr)^p\Big] {\rm d}s + C.
\end{eqnarray*}
Since $\sigma$ is bounded, by using Young's inequality, we have 
\begin{align}
  \bigl(\mathcal{J}(X_s)\bigr)^{p-2} \Bigl( \mathscr{A}(X_s) , \sigma(X_s)\Bigr)^2_{{\mathbb L}^2} 
& \le C \bigl(\mathcal{J}(X_s)\bigr)^{p-2} \|\mathscr{A}(X_s)\|^2_{{\mathbb L}^2} \notag \\
& \le \theta \bigl( \mathcal{J}(X_s)\bigr)^{p-1} \|\mathscr{A}(X_s)\|^2_{{\mathbb L}^2} + C(\theta) \|\mathscr{A}(X_s)\|^2_{{\mathbb L}^2}. \notag 
\end{align}
We choose $\theta>0$ such that $p-\frac{p}{2}(p-1)\theta >0$. With this choice of $\theta$, by \eqref{esti:w12-norm-strong-solun},  we have for some constant $C_1=C_1(p)>0$, 
\begin{align}
 & \mathbb{E}\Big[\bigl( \mathcal{J}(X_t)\bigr)^p \Big] + C_1(p) 
 \mathbb{E}\Big[ \int_0^t \bigl( \mathcal{J}(X_s)\bigr)^{p-1} \Vert  \mathscr{A}(X_s)\Vert^2_{{\mathbb L}^2}\, {\rm d}s \Big] \notag \\
& \le \mathbb{E}\Big[\bigl( \mathcal{J}(x)\bigr)^p \Big] 
+ C(p)\int_0^t \mathbb{E}\Big[\bigl( \mathcal{J}(X_s)\bigr)^p \Big]\,{\rm d}s + C. \notag 
\end{align}
We use Gronwall's lemma to conclude that 
\begin{align}
 \sup_{t\in [0,T]} \mathbb{E}\Big[\bigl( \mathcal{J}(X_t)\bigr)^p \Big] 
 + \mathbb{E}\Big[ \int_0^T \bigl(\mathcal{J}(X_s)\bigr)^{p-1} \|\mathscr{A}(X_s)\|^2_{{\mathbb L}^2}\, {\rm d}s \Big] \le C. \label{esti:higer-moment-functional-strong-solun-1} 
\end{align}
In view of \eqref{esti:higher-moment-in-l2-strong-solun}, and \eqref{esti:higer-moment-functional-strong-solun-1} it follows that 
\begin{align}
 \sup_{t\in [0,T]}\mathbb{E}\Big[ \|X_t\|_{\mathbb{W}^{1,2}}^p\Big] \le C, \quad \forall p\ge 1. \label{esti:higher-moment-w12-strong-solun}
\end{align}
Note that, 
\begin{align}
\|\Delta X_s \|^2_{{\mathbb L}^2}\le \Vert \mathscr{A}(X_s)\Vert^2_{{\mathbb L}^2} +
\Vert X_s\Vert^6_{{\mathbb L}^6} + \Vert X_s\Vert_{{\mathbb L}^2}^2
 \leq \Vert \mathscr{A}(X_s)\Vert^2_{{\mathbb L}^2} + C \Vert X_s\Vert^6_{{\mathbb W}^{1,2}}
+ \Vert X_s\Vert_{{\mathbb W}^{1,2}}^2\, , \notag 
\end{align}
where we use the embedding ${\mathbb W}^{1,2} \hookrightarrow {\mathbb L}^6$ for $d \leq 3$. Thanks to 
\eqref{esti:higer-moment-functional-strong-solun-1} and \eqref{esti:higher-moment-w12-strong-solun}, together with Cauchy-Schwartz inequality and the above 
estimate, we see that 
\begin{align}
 &\mathbb{E}\Big[\int_0^T \bigl( \mathcal{J}(X_s)\bigr)^{p-1} \Vert  \Delta X_s\Vert^2_{{\mathbb L}^2}\, {\rm d}s \Big] \notag \\
& \le  C +  \mathbb{E}\Big[\int_0^T \bigl( \mathcal{J}(X_s)\bigr)^{p-1} \big( \Vert X_s\Vert^6_{{\mathbb W}^{1,2}} + \Vert X_s\Vert_{{\mathbb W}^{1,2}}^2\big) {\rm d}s \Big] \notag \\
& \le C + T \sup_{t\in [0,T]} \mathbb{E}\Big[\bigl( \mathcal{J}(X_t)\bigr)^p \Big]  + T 
\sup_{t\in [0,T]} \mathbb{E}\Big[\|X_t\|_{\mathbb{W}^{1,2}}^{6p} + \|X_t\|_{\mathbb{W}^{1,2}}^{2p} \Big] \le C.\label{esti:higer-moment-functional-strong-solun-2}
\end{align}
One can combine \eqref{esti:higer-moment-functional-strong-solun-1} and \eqref{esti:higer-moment-functional-strong-solun-2} to conclude the assertion.
\vspace{.2cm}

{\rm (ii)} Use It\^{o}'s formula for $\xi \mapsto g(\xi) = \frac{1}{2} \Vert \Delta \xi\Vert^{2}_{{\mathbb L}^{2}}$. We compute its derivatives.
\begin{eqnarray*}
Dg(\xi) = \Delta^2 \xi, \qquad  D^2g(\xi)= \Delta^2\, .
\end{eqnarray*}
Note that by integration by parts
\begin{align}
\Bigl( \Delta^2 X_s, -\Delta X_s + \vert X_s\vert^2 X_s\Bigr)_{{\mathbb L}^2}
 \ge \frac{1}{2} \Bigl[  \Vert \nabla \Delta X_s\Vert^2_{{\mathbb L}^2} -3 \Vert \vert X_s\vert^2 \nabla X_s\Vert^2_{{\mathbb L}^2}\Bigr]. \label{sac-1fg}
\end{align}
Because of ${\mathbb W}^{1,2} \hookrightarrow {\mathbb L}^6$ for $d \leq 3$ we 
estimate the last term through
\begin{align}
 \Vert \vert X_s\vert^2 \nabla X_s\Vert^2_{{\mathbb L}^2} & \leq  \Vert X_s\Vert^4_{{\mathbb L}^6} \Vert \nabla X_s\Vert^2_{{\mathbb L}^6} 
\leq C \Vert X_s\Vert^{4}_{{\mathbb W}^{1,2}} \big(\|\grad X_s\|_{\mathbb{L}^2}^2 + \|\Delta X_s\|_{\mathbb{L}^2}^2\big) \notag \\
& \le C\Vert X_s\Vert^{6}_{{\mathbb W}^{1,2}} + C \big( \|X_s\|_{\mathbb{L}^2}^2 + \|\grad X_s\|_{\mathbb{L}^2}^2\big)^2 \|\Delta X_s\|_{\mathbb{L}^2}^2 \notag \\
& \equiv C\Vert X_s\Vert^{6}_{{\mathbb W}^{1,2}} + \mathcal{M}. \label{esti:h2-norm-strong-1}
\end{align}
Note that $\|X_s\|_{\mathbb{L}^2}^2 \le C(1+ \psi(X_s))$, and therefore we see that 
\begin{align}
 \mathcal{M} \le C\Big( 1+ \big(\mathcal{J}(X_s)\big)^2\Big)\|\Delta X_s\|_{\mathbb{L}^2}^2. \label{esti:h2-norm-strong-2}
\end{align}

Inserting \eqref{esti:h2-norm-strong-1} and \eqref{esti:h2-norm-strong-2} in \eqref{sac-1fg}, we obtain 
\begin{align}
 & {\mathbb E}\Big[  \Vert \Delta X_t\Vert^2_{{\mathbb L}^2}\Big] +  \mathbb{E}\Big[\int_0^t  \| \nabla \Delta X_s \|^2_{{\mathbb L}^2}\,{\rm d}s \Big] \notag \\
 & \le \mathbb{E}\big[\Delta x\|_{\mathbb{L}^2}^2\big] + C\mathbb{E}\Big[\int_0^t \Vert X_s\Vert^{6}_{{\mathbb W}^{1,2}}\,{\rm d}s\Big]
 + C\mathbb{E}\Big[ \int_0^T \bigl(\mathcal{J}(X_s)\bigr)^2 \Vert  \Delta X_s\Vert^2_{{\mathbb L}^2}\,{\rm d}s\Big] \notag \\
 & \hspace{2cm} + C\int_{0}^t \mathbb{E}\big[\|\Delta X_s\|_{\mathbb{L}^2}^2\big]\,{\rm d}s
 + C\mathbb{E}\Big[\int_0^t \|\Delta \sigma(X_s)\|_{\mathbb{L}^2}^2\,{\rm d}s\Big]. \label{esti:w22-strong-solun-1}
\end{align}
Let $d=2$. Then by \eqref{esti:assumption-2}, we see that 
\begin{align}
 \mathbf{G}:& =\mathbb{E}\Big[\int_0^t \|\Delta \sigma(X_s)\|_{\mathbb{L}^2}^2\,{\rm d}s\Big] \notag \\
 & \le K_2 \int_{0}^t \mathbb{E}\big[\|\Delta X_s\|_{\mathbb{L}^2}^2\big]\,{\rm d}s + K_3 
 \mathbb{E}\Big[ \int_0^T \|\grad X_s\|_{\mathbb{L}^2}^2 \|\Delta X_s\|_{\mathbb{L}^2}^2\,{\rm d}s\Big] + K_4 \int_{0}^T \mathbb{E}\big[\|\grad X_s\|_{\mathbb{L}^2}^4\big]\,{\rm d}s.\notag 
\end{align}
One can combine the above estimate in \eqref{esti:w22-strong-solun-1} and use \eqref{esti:higher-moment-w12-strong-solun}, 
\eqref{esti:higer-moment-functional-strong-solun-2} along with Gronwall's lemma to conclude the assertion for $d\le 2$. 
\vspace{.1cm}

Let $d=3$. Then, thanks to \eqref{esti:assumption-2} and Cauchy-Schwartz inequality, we have 
\begin{align}
 \mathbf{G} &\le \frac{1}{2}\mathbb{E}\Big[\int_0^t  \| \nabla \Delta X_s \|^2_{{\mathbb L}^2}\,{\rm d}s \Big]
 + C \mathbb{E}\Big[\int_0^T  \| \nabla  X_s \|^{10}_{{\mathbb L}^2}\,{\rm d}s \Big] + L_4 \int_{0}^T \mathbb{E}\big[\|\grad X_s\|_{\mathbb{L}^2}^4\big]\,{\rm d}s \notag \\
 & \hspace{3cm}+ L_2 \int_{0}^t \mathbb{E}\big[\|\Delta X_s\|_{\mathbb{L}^2}^2\big]\,{\rm d}s.\label{esti:w22-strong-solun-2}
\end{align}
We combine \eqref{esti:w22-strong-solun-1} and \eqref{esti:w22-strong-solun-2} and use \eqref{esti:higher-moment-w12-strong-solun}, 
\eqref{esti:higer-moment-functional-strong-solun-2} along with Gronwall's lemma to conclude the assertion for $d=3$. 
 This completes the proof. 
\end{proof}

The following result is to bound the increments $X_t - X_s$ of the solutions of \eqref{eq:Allen-Cahn} in terms of $\vert t-s\vert^{\alpha}$ 
for some $\alpha > 0$; its proof uses Lemma~\ref{lem:higer-moment-functional-strong-solun} in particular.

\begin{lem}\label{lem:esti-time-difference-strong-solun}
Let the assumption \ref{A1} holds and $x \in {\mathbb W}^{2,2}_{\rm per}$ . Then for every $0 \leq s \leq t \leq T$, there exists a constant 
$C \equiv C(p,T)>0$ such that
\begin{enumerate}
\item[(i)] \quad ${\mathbb E}\bigl[ \Vert X_t - X_s\Vert^p_{{\mathbb L}^2}\bigr] \leq C \vert t-s\vert
\qquad (p\geq 2)$,
\item[(ii)] \quad ${\mathbb E}\bigl[ \Vert X_t - X_s\Vert^2_{{\mathbb W}^{1,2}}\bigr] \leq C \vert t-s\vert\, .$
\end{enumerate}
\end{lem}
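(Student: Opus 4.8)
The plan is to start from the strong variational form \eqref{eq:variational-Allen-Cahn}, which---since Lemma~\ref{lem:higer-moment-functional-strong-solun}(ii) guarantees $\Delta X_r\in{\mathbb L}^2_{\rm per}$ and hence $\mathscr{A}(X_r)=\Delta X_r-D\psi(X_r)\in{\mathbb L}^2_{\rm per}$---may be written in the integrated ${\mathbb L}^2$-form
\begin{align}
X_t-X_s=\int_s^t \mathscr{A}(X_r)\,{\rm d}r+\int_s^t\sigma(X_r)\,{\rm d}W_r\,. \notag
\end{align}
For part (i) I would use the convexity split $\|X_t-X_s\|_{{\mathbb L}^2}^p\le 2^{p-1}(\|\int_s^t\mathscr{A}(X_r)\,{\rm d}r\|_{{\mathbb L}^2}^p+\|\int_s^t\sigma(X_r)\,{\rm d}W_r\|_{{\mathbb L}^2}^p)$ and estimate the two contributions separately.

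The stochastic term is the benign one: by the Burkholder--Davis--Gundy inequality and the boundedness of $\sigma$ (assumption~\ref{A1}, which yields $\|\sigma(X_r)\|_{{\mathbb L}^2}^2\le C$ pointwise), one obtains ${\mathbb E}[\|\int_s^t\sigma(X_r)\,{\rm d}W_r\|_{{\mathbb L}^2}^p]\le C_p\,{\mathbb E}[(\int_s^t\|\sigma(X_r)\|_{{\mathbb L}^2}^2\,{\rm d}r)^{p/2}]\le C|t-s|^{p/2}\le C|t-s|$, the last step using $p\ge2$ and $|t-s|\le T$. For the drift term, Cauchy--Schwarz in time gives $\|\int_s^t\mathscr{A}(X_r)\,{\rm d}r\|_{{\mathbb L}^2}^p\le |t-s|^{p/2}(\int_s^t\|\mathscr{A}(X_r)\|_{{\mathbb L}^2}^2\,{\rm d}r)^{p/2}$. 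For $p=2$ this closes at once, since the pointwise bound $\sup_r{\mathbb E}[\|\mathscr{A}(X_r)\|_{{\mathbb L}^2}^2]\le C$ follows from $\|\mathscr{A}(X_r)\|_{{\mathbb L}^2}\le\|\Delta X_r\|_{{\mathbb L}^2}+\|X_r\|_{{\mathbb L}^6}^3+\|X_r\|_{{\mathbb L}^2}$ together with Lemma~\ref{lem:higer-moment-functional-strong-solun}, the bound \eqref{esti:higher-moment-w12-strong-solun}, and the embedding ${\mathbb W}^{1,2}\hookrightarrow{\mathbb L}^6$; hence ${\mathbb E}[\|\int_s^t\mathscr{A}\,{\rm d}r\|_{{\mathbb L}^2}^2]\le C|t-s|^2\le C|t-s|$. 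For general $p\ge2$ one is reduced to bounding ${\mathbb E}[(\int_0^T\|\mathscr{A}(X_r)\|_{{\mathbb L}^2}^2\,{\rm d}r)^{p/2}]$, and here lies the main obstacle: this requires arbitrary moments of the drift, equivalently of $\|\Delta X_r\|_{{\mathbb L}^2}$, which go beyond the second-moment bound of Lemma~\ref{lem:higer-moment-functional-strong-solun}(ii). I expect to supply these by a higher-moment energy estimate, i.e.\ by repeating the argument of Lemma~\ref{lem:higer-moment-functional-strong-solun}(ii) for the functional $\xi\mapsto(\tfrac12\|\Delta\xi\|_{{\mathbb L}^2}^2)^{p/2}$ and closing it through \eqref{esti:assumption-2} and the weighted bound in Lemma~\ref{lem:higer-moment-functional-strong-solun}(i); checking that this indeed closes, in particular in $d=3$ where \eqref{esti:assumption-2} involves the critical factor $\|\nabla\Delta\xi\|_{{\mathbb L}^2}^{3/2}$, is the delicate point.

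For part (ii) I would instead apply It\^o's formula to $t\mapsto\frac12\|\nabla(X_t-X_s)\|_{{\mathbb L}^2}^2$ at fixed $s$ (justified, as before \eqref{esti:w12-norm-strong-solun}, on the Galerkin level with a subsequent passage to the limit). Integration by parts in the periodic setting turns the drift contribution into $-(\Delta(X_r-X_s),\mathscr{A}(X_r))_{{\mathbb L}^2}$, whose expectation is bounded by $(\,{\mathbb E}[(\|\Delta X_r\|_{{\mathbb L}^2}+\|\Delta X_s\|_{{\mathbb L}^2})^2]\,)^{1/2}(\,{\mathbb E}[\|\mathscr{A}(X_r)\|_{{\mathbb L}^2}^2]\,)^{1/2}\le C$ uniformly in $r$, using Lemma~\ref{lem:higer-moment-functional-strong-solun}(ii) and the pointwise second-moment bound on $\mathscr{A}$ above; the It\^o correction $\frac12\|\nabla\sigma(X_r)\|_{{\mathbb L}^2}^2$ is controlled by \eqref{esti:assumption-1} and $\sup_r{\mathbb E}[\mathcal{J}(X_r)]\le C$. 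Since every integrand is then uniformly bounded in expectation, integrating over $[s,t]$ gives ${\mathbb E}[\|\nabla(X_t-X_s)\|_{{\mathbb L}^2}^2]\le C|t-s|$, and adding the $p=2$ case of part (i) yields the claimed ${\mathbb W}^{1,2}$-bound. Note that part (ii) requires only second moments and is therefore unconditional, so that the entire difficulty is concentrated in the higher-$p$ drift estimate of part (i).
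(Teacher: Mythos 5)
Your part (ii) is essentially the paper's own proof of that half (It\^o's formula for $\tfrac12\Vert \nabla (X_t-X_s)\Vert^2_{{\mathbb L}^2}$, integration by parts to produce $\bigl(-\Delta[X_\zeta-X_s],\mathscr{A}(X_\zeta)\bigr)_{{\mathbb L}^2}$, then Lemma~\ref{lem:higer-moment-functional-strong-solun}(ii) and the second-moment bound on $\mathscr{A}(X)$), and your $p=2$ case of part (i) is correct. The genuine gap is part (i) for $p>2$, which is not a side case: the paper needs it, e.g.\ in \eqref{esti:III} with $p=2(1+\delta)$. Your decomposition $X_t-X_s=\int_s^t\mathscr{A}(X_r)\,{\rm d}r+\int_s^t\sigma(X_r)\,{\rm d}W_r$ forces you to control ${\mathbb E}\bigl[\bigl(\int_0^T\Vert\mathscr{A}(X_r)\Vert^2_{{\mathbb L}^2}\,{\rm d}r\bigr)^{p/2}\bigr]$, i.e.\ arbitrary moments of $\Vert\Delta X\Vert_{{\mathbb L}^2}$. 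No such bound exists in the paper (Lemma~\ref{lem:higer-moment-functional-strong-solun}(ii) is second-moment only), and you do not prove one either; you only sketch ``repeat the argument with the functional $(\tfrac12\Vert\Delta\xi\Vert^2_{{\mathbb L}^2})^{p/2}$'' and flag $d=3$ as delicate. That sketch does not obviously close: the cubic term produces, after the analogue of \eqref{esti:h2-norm-strong-1}--\eqref{esti:h2-norm-strong-2}, a Gronwall contribution of the form ${\mathbb E}\bigl[(1+\mathcal{J}^2(X_s))\Vert\Delta X_s\Vert^p_{{\mathbb L}^2}\bigr]$ carrying a random, unbounded weight; removing that weight requires either a further interpolation of $\Vert\Delta X\Vert^2_{{\mathbb L}^2}\le\Vert\nabla X\Vert_{{\mathbb L}^2}\Vert\nabla\Delta X\Vert_{{\mathbb L}^2}$ against the weighted dissipation $\Vert\Delta X\Vert^{p-2}_{{\mathbb L}^2}\Vert\nabla\Delta X\Vert^2_{{\mathbb L}^2}$, or higher moments of $\Delta X$ (circular); and in $d=3$ the term $L_3\Vert\nabla\Delta\xi\Vert^{3/2}_{{\mathbb L}^2}\Vert\nabla\xi\Vert^{5/2}_{{\mathbb L}^2}$ of \eqref{esti:assumption-2} must additionally be absorbed at this weighted level. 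None of this is carried out, so as written your argument establishes the lemma only for $p=2$.

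The paper's proof of (i) avoids this obstacle entirely, and the mechanism is worth internalizing because it is the same structural idea driving the whole paper. One applies It\^o's formula to $u\mapsto\tfrac1p\Vert u-X_s\Vert^p_{{\mathbb L}^2}$ along $\zeta\mapsto X_\zeta$ on $[s,t]$, so that the drift enters as $\bigl(\mathscr{A}(X_\zeta)-\mathscr{A}(X_s),X_\zeta-X_s\bigr)_{{\mathbb L}^2}$ plus the fixed-time term $\bigl(-\mathscr{A}(X_s),X_\zeta-X_s\bigr)_{{\mathbb L}^2}$. The difference term is handled by the weak monotonicity \eqref{AD1}, giving a good-signed gradient term plus $C\Vert X_\zeta-X_s\Vert^p_{{\mathbb L}^2}$; in the fixed-time term the Laplacian is integrated by parts, so only $\Vert\nabla X_s\Vert_{{\mathbb L}^2}$ and $\Vert X_s\Vert^3_{{\mathbb L}^6}$ appear, and these have moments of every order by \eqref{esti:higher-moment-w12-strong-solun}. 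Gronwall then gives (i) for all $p\ge2$ without ever invoking a moment of $\Vert\Delta X\Vert_{{\mathbb L}^2}$. To complete your write-up you should either prove the missing higher-moment ${\mathbb W}^{2,2}$ estimate in full (a substantial task) or switch to this monotonicity argument.
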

\begin{proof}
$\rm (i)$  Fix $s\ge 0$. An application of It\^{o}'s formula for $u\mapsto  \frac{1}{p}\|u-\beta\|_{\mathbb{L}^2}^p$ with $\beta=X_s(\cdot,\omega) \in \R $ to \eqref{eq:Allen-Cahn} 
 yields, after taking expectation 
\begin{align}
&{\mathbb E}\Bigl[ \frac{1}{p} \Vert X_t - X_s\Vert^p_{{\mathbb L}^2}+  \int_s^t
\Vert X_{\zeta} - X_s\Vert^{p-2}_{{\mathbb L}^2} \Bigl( \mathscr{A}(X_{\zeta}) - \mathscr{A}(X_{s}), X_{\zeta} - X_s\Bigr)_{{\mathbb L}^2}\,{\rm d}\zeta \Bigr] \notag \\
&\quad \leq {\mathbb E}\Bigl[ \int_s^t \Vert X_{\zeta} - X_s\Vert^{p-2}_{{\mathbb L}^2} \Bigl(-\mathscr{A}(X_{s}), X_{\zeta} - X_s \Bigr)_{{\mathbb L}^2} {\rm d}\zeta\Bigr]
 + C_p  {\mathbb E}\Bigl[\int_{s}^t \Vert X_{\zeta} - X_s\Vert^{p-2}_{{\mathbb L}^2} \Vert \sigma(X_{\zeta})\Vert^2_{{\mathbb L}^2}
\, {\rm d}\zeta\Bigr] \notag \\
& \quad \equiv A_1 + A_2. \notag 
\end{align}
We use the weak monotonicity property \eqref{AD1} to bound from below the second term on the left-hand side,
$$ \geq {\mathbb E}\Bigl[ \int_s^t \Big(\, \Vert  X_\zeta-X_s\Vert^{p-2}_{{\mathbb L}^2} \Vert \nabla(X_\zeta-X_s)\Vert^2_{{\mathbb L}^2}
- C\, \Vert X_{\zeta} - X_s\Vert^p_{{\mathbb L}^2}\Big){\rm d}{\zeta}\Bigr]\, .$$
 The integration by parts formula and Young's inequality reveal that 
\begin{align*}
 \Bigl(-\mathscr{A}(X_s), X_{\zeta} - X_s \Bigr)_{{\mathbb L}^2} &
 \le \|\grad X_s\|_{\mathbb{L}^2}\|\grad(X_\zeta -X_s)\|_{\mathbb{L}^2} + \|D\psi(X_s)\|_{\mathbb{L}^2}\|X_\zeta -X_s\|_{\mathbb{L}^2} \\
 & \le \Big(\|X_s\|_{\mathbb{L}^6}^3 + \|X_s\|_{\mathbb{W}^{1,2}}\Big)\|X_\zeta -X_s\|_{\mathbb{W}^{1,2}}
\end{align*}
Since ${\mathbb W}^{1,2} \hookrightarrow {\mathbb L}^6$ for $d \leq 3$, by using Young's inequality, we see that 
\begin{align}
 A_1 & \leq C\, {\mathbb E}\Bigl[ \int_s^t \Vert X_{\zeta} - X_s\Vert^{p-2}_{{\mathbb L}^2} \Bigl(\Vert X_s\Vert_{{\mathbb W}^{1,2}} + \Vert X_s\Vert_{{\mathbb L}^6}^3
\Bigr) \Vert  X_{\zeta} - X_s\Vert_{{\mathbb W}^{1,2}}\, {\rm d}\zeta\Bigr] \notag \\
&\leq {\mathbb E}\Bigl[ \frac{1}{2} \int_s^t\, \Vert  X_\zeta-X_s\Vert^{p-2}_{{\mathbb L}^2} 
\Bigl(\Vert  X_\zeta-X_s\Vert^2_{{\mathbb L}^2} + \Vert \nabla(X_\zeta-X_s)\Vert^2_{{\mathbb L}^2}\Bigr)
\, {\rm d}\zeta\Bigr] \notag  \\
& \hspace{1cm} + \frac{1}{2}\mathbb{E}\Big[ \int_s^t \Vert X_{\zeta} - X_s\Vert^{p-2}_{{\mathbb L}^2}\Big( \|X_s\|_{\mathbb{W}^{1,2}}^2 + \|X_s\|_{\mathbb{W}^{1,2}}^{6}
\Big) \,{\rm d}\zeta\Big] \notag \\
& \le  \frac{1}{2}\mathbb{E}\Big[ \int_s^t \Vert X_{\zeta} - X_s\Vert^{p-2}_{{\mathbb L}^2} \Vert \nabla(X_\zeta-X_s)\Vert^2_{{\mathbb L}^2}\,
{\rm d}\zeta \Big] + C_p \int_s^t \mathbb{E}\Big[\|X_\zeta -X_s\|_{\mathbb{L}^2}^p\,{\rm d}\zeta \Big] \notag  \\
& \hspace{2cm} + C |t - s|\, \sup_{\zeta \in [s,t]} {\mathbb E}\Bigl[ \Vert X_{\zeta}\Vert_{{\mathbb W}^{1,2}}^{p}  + \Vert X_{\zeta}\Vert_{{\mathbb W}^{1,2}}^{3p}\Bigr].\notag 
\end{align}
Again, thanks to \eqref{sac-1b} and Young's inequality, we see that 
\begin{align*}
 A_2 \le C_p \int_s^t \mathbb{E}\Big[\|X_\zeta -X_s\|_{\mathbb{L}^2}^p\,{\rm d}\zeta
 + |t-s| \sup_{\zeta \in [s,t]}\mathbb{E}\Big[\|X_\zeta\|_{\mathbb{L}^2}^p\Big].
\end{align*}
We combine all the above estimates and use \eqref{esti:higher-moment-in-l2-strong-solun} and  Lemma \ref{lem:higer-moment-functional-strong-solun}, $\rm (i)$ along with Gronwall's
inequality to get the result.
\vspace{.2cm}

$\rm (ii)$  We apply It\^{o}'s formula to the function $\frac{1}{2} |\nabla X_t - \beta|^2$ for any $\beta \in \R$ to \eqref{eq:Allen-Cahn}, and then use $\beta=\nabla X_s$ for fixed $0<s\le t$ and 
integrate with respect to spatial variable. Thanks to Young's inequality, and the boundedness of $\sigma^{\prime}$,
\begin{align}
\frac{1}{2} {\mathbb E}\Bigl[ \Vert \nabla(X_t - X_s)\Vert^2_{{\mathbb L}^2}\Bigr] &\le  \Bigl\vert \int_s^t
{\mathbb E}\Bigl[\bigl( -\Delta [X_{\zeta} - X_s],
\mathscr{A}(X_{\zeta})\bigr)_{{\mathbb L}^2}\Bigr]\, {\rm d}\zeta \Bigr\vert 
+ C\int_{s}^t {\mathbb E}\bigl[\Vert \nabla \sigma(X_{\zeta})\Vert^2_{{\mathbb L}^2}\bigr]\, {\rm d}\zeta \notag \\
&\leq C \vert t-s\vert \sup_{\zeta \in [s,t]} {\mathbb E}\Big[ \Vert \Delta X_\zeta\Vert_{{\mathbb L}^2}^2\Big] + C\int_s^t \mathbb{E}\Big[ \|\mathscr{A}(X_{\zeta})\|_{\mathbb{L}^2}^2 + 
\|\grad X_{\zeta}\|_{\mathbb{L}^2}^2\Big]\,{\rm d}\zeta.\notag 
\end{align}
Notice that $\|\mathscr{A}(X_{\zeta})\|_{\mathbb{L}^2}^2 \le \|\Delta X_{\zeta}\|_{\mathbb{L}^2}^2 + 
 C\big( \|X_\zeta\|_{\mathbb{L}^6}^6 + \|X_\zeta\|_{\mathbb{L}^2}^2\big)$. From the above estimate, and Lemma~\ref{lem:higer-moment-functional-strong-solun},
$\rm (ii)$, \eqref{esti:higher-moment-w12-strong-solun}, and the embedding ${\mathbb W}^{1,2} \hookrightarrow {\mathbb L}^6$
for $d \leq 3$, we conclude
\begin{align}
{\mathbb E}\Bigl[ \Vert \nabla(X_t - X_s)\Vert^2_{{\mathbb L}^2}\Bigr] \le  C \vert t-s\vert \sup_{\zeta \in [s,t]} {\mathbb E}\Big[ \Vert \Delta X_\zeta\Vert_{{\mathbb L}^2}^2 +
\Vert X_\zeta\Vert^6_{{\mathbb W}^{1,2}} + \Vert X_{\zeta}\Vert^2_{{\mathbb W}^{1,2}}\Bigr]\le  C |t-s|.\label{esti:grad}
\end{align}

One can use $\rm (i)$ of Lemma \ref{lem:esti-time-difference-strong-solun} for $p=2$, and \eqref{esti:grad} to arrive at $\rm (ii)$. This finishes the proof.
\end{proof}

\section{Semi-discrete scheme (in time) and its bound}\label{sec:time-discrete}
Let $0 = t_0 < t_1 < \ldots < t_J$ be an equi-distant partition of $[0,T]$ of size $k = \frac{T}{J}$. The structure preserving time discrete version of \eqref{eq:Allen-Cahn} defines 
an $\{{\mathcal F}_{t_j};\, 0 \leq j \leq J\}$-adapted ${\mathbb W}^{1,2}_{\rm per}$-valued process $\{ X^j;\, 0 \leq j \leq J\}$
such that ${\mathbb P}$-almost surely and for all $\phi \in {\mathbb W}^{1,2}_{{\rm per}}$
\begin{align} \label{eq:time-discrete}
\begin{cases}
\big(X^{j} - X^{j-1}, \phi\big)_{{\mathbb L}^2} + k \Big[\big( \nabla X^j , \nabla \phi \big)_{{\mathbb L}^2} + 
\big( f(X^j, X^{j-1}),\phi\big)_{{\mathbb L}^2}\Big]= 
\Delta_j W \big( \sigma(X^{j-1}), \phi\big)_{{\mathbb L}^2} \\
X^0=x\in {\mathbb L}^{2}_{\rm per},
\end{cases}
\end{align}
where $\Delta_j W$ and $f$ are defined in \eqref{defi:brownian-increment-function-mixed}. Solvability for $k < 1$ easily follows from a
coercivity property of the drift operator, and the Lipschitz continuity property \eqref{sac-1b} for the diffusion operator. Below, we denote again
\begin{align*}
 \mathcal{J}(X^j)= \frac{1}{2}\|\grad X^j\|_{\mathbb{L}^2}^2 + \psi(X^j). 
\end{align*}
The proof of the following lemma evidences why $D\psi(X^j)$ is substituted by $f(X^j, X^{j-1})$ in \eqref{eq:time-discrete} to recover uniform bounds for arbitrary higher moments of 
$\mathcal{J}(X^j)$.
\begin{lem}\label{lem:moment-time-discrete}
Suppose $x \in {\mathbb W}^{1,2}_{\rm per}$, and that assumption \ref{A1} holds. For every $p = 2^r$, $r \in {\mathbb N}^*$, there exists a constant
$C \equiv C(p,T) > 0$ such that 
 \begin{align}
  &\max_{1\le j\le J} \mathbb{E}\Big[ \big|\mathcal{J}(X^j)\big|^p \Big] + \sum_{j=1}^J \mathbb{E}\Bigg[ \prod_{\ell=1}^r \big[ [\mathcal{J}(X^j)]^{2^{\ell-1}} + [\mathcal{J}(X^{j-1})]^{2^{\ell-1}}\big]\times 
  \Big( \|\grad (X^j-X^{j-1})\|_{\mathbb{L}^2}^2  \notag \\
  & \hspace{3.9cm }+ \big\| |X^j|^2- |X^{j-1}|^2\big\|_{\mathbb{L}^2}^2 + k \|-\Delta X^j + f(X^j, X^{j-1})\|_{\mathbb{L}^2}^2\Big)\Bigg]\le C. \notag 
 \end{align}
\end{lem}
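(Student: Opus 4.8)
The plan is to mimic the proof of Lemma~\ref{lem:higer-moment-functional-strong-solun}(i) at the discrete level, replacing It\^o's formula for $\xi \mapsto (\mathcal{J}(\xi))^p$ by an exact discrete chain rule. The restriction $p = 2^r$ enters through the elementary factorization
$$ a^{2^r} - b^{2^r} = (a-b)\prod_{\ell=1}^r\bigl(a^{2^{\ell-1}} + b^{2^{\ell-1}}\bigr), $$
which, applied with $a = \mathcal{J}(X^j)$ and $b = \mathcal{J}(X^{j-1})$, rewrites $\mathcal{J}(X^j)^p - \mathcal{J}(X^{j-1})^p$ as $(\mathcal{J}(X^j) - \mathcal{J}(X^{j-1}))\,P_j$ with the nonnegative weight $P_j := \prod_{\ell=1}^r\bigl([\mathcal{J}(X^j)]^{2^{\ell-1}} + [\mathcal{J}(X^{j-1})]^{2^{\ell-1}}\bigr)$; this is exactly the product appearing in the claim.

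First I would record the one-step energy identity obtained by testing \eqref{eq:time-discrete} with $\phi = X^j - X^{j-1}$. The only nonstandard ingredient is the algebraic identity
$$ \bigl(f(X^j,X^{j-1}), X^j - X^{j-1}\bigr)_{\mathbb{L}^2} = \psi(X^j) - \psi(X^{j-1}) + \tfrac14\bigl\||X^j|^2 - |X^{j-1}|^2\bigr\|_{\mathbb{L}^2}^2, $$
which is precisely why $D\psi$ was replaced by $f(y,z) = (|y|^2-1)\frac{y+z}{2}$: it creates the numerical dissipation $\frac14\||X^j|^2 - |X^{j-1}|^2\|_{\mathbb{L}^2}^2$ with a favourable sign. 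Combined with the polarization identity $(\nabla X^j, \nabla(X^j - X^{j-1}))_{\mathbb{L}^2} = \frac12\|\nabla X^j\|_{\mathbb{L}^2}^2 - \frac12\|\nabla X^{j-1}\|_{\mathbb{L}^2}^2 + \frac12\|\nabla(X^j-X^{j-1})\|_{\mathbb{L}^2}^2$, this yields
$$ \mathcal{J}(X^j) - \mathcal{J}(X^{j-1}) + \tfrac1k\|X^j - X^{j-1}\|_{\mathbb{L}^2}^2 + \tfrac12\|\nabla(X^j-X^{j-1})\|_{\mathbb{L}^2}^2 + \tfrac14\bigl\||X^j|^2-|X^{j-1}|^2\bigr\|_{\mathbb{L}^2}^2 = \tfrac{\Delta_j W}{k}\bigl(\sigma(X^{j-1}), X^j - X^{j-1}\bigr)_{\mathbb{L}^2}. $$
By elliptic regularity on \eqref{eq:time-discrete} (its right-hand side lies in ${\mathbb L}^2_{\rm per}$ thanks to ${\mathbb W}^{1,2}\hookrightarrow{\mathbb L}^6$) one has $X^j \in {\mathbb W}^{2,2}_{\rm per}$, so I may use the strong form $X^j - X^{j-1} = -k\,\mathcal{G}^j + \Delta_j W\,\sigma(X^{j-1})$ with $\mathcal{G}^j := -\Delta X^j + f(X^j,X^{j-1})$; expanding $\frac1k\|X^j-X^{j-1}\|_{\mathbb{L}^2}^2$ then produces exactly the third dissipation term $k\|\mathcal{G}^j\|_{\mathbb{L}^2}^2$ together with noise contributions.

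Next I would multiply this one-step identity by the nonnegative weight $P_j$, use the factorization above to telescope the drift into $\mathcal{J}(X^m)^p - \mathcal{J}(X^0)^p$, sum over $j=1,\dots,m$ for arbitrary $m\le J$, and take expectations. All three dissipation terms then appear weighted by $P_j$ on the left, matching the claim, and it remains to bound the stochastic contributions, namely $\sum_j \mathbb{E}\bigl[\frac1k P_j\,\Delta_j W\,(\sigma(X^{j-1}), X^j-X^{j-1})_{\mathbb{L}^2}\bigr]$ and the cross terms arising from the expansion of $\|X^j-X^{j-1}\|_{\mathbb{L}^2}^2$.

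This is the main obstacle: because the scheme is implicit, both $P_j$ and $X^j$ depend on $\Delta_j W$, so these are not martingale increments and do not vanish in expectation. I would freeze $P_j$ and $X^j$ to their $\mathcal{F}_{t_{j-1}}$-measurable leading parts: the genuinely predictable contribution vanishes under conditional expectation, the $(\Delta_j W)^2$ contribution reproduces the discrete It\^o correction $\sim\mathbb{E}\bigl[P_{j-1}\,\|\sigma(X^{j-1})\|_{\mathbb{L}^2}^2\bigr]$, and the freezing remainders are controlled using $\Delta_j W \sim \mathcal{N}(0,k)$ against a fraction of the dissipation $P_j\,k\|\mathcal{G}^j\|_{\mathbb{L}^2}^2$. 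Exactly as in the continuous proof, the correction term is estimated by $C\bigl(1+\mathcal{J}(X^{j-1})\bigr)P_{j-1}\le C\,\mathcal{J}(X^{j-1})^p + C$ via \eqref{esti:assumption-1} and $\|X^{j-1}\|_{\mathbb{L}^2}^2 \le C(1+\psi(X^{j-1}))$, while the terms carrying $\|\mathcal{G}^j\|_{\mathbb{L}^2}^2$ are split by Young's inequality with a small parameter $\theta$ and absorbed into the left-hand dissipation. This leaves the discrete inequality $\mathbb{E}[\mathcal{J}(X^m)^p] + c\sum_{j\le m}\mathbb{E}[P_j(\cdots)] \le C + Ck\sum_{j\le m}\mathbb{E}[\mathcal{J}(X^{j-1})^p]$, to which the discrete Gr\"onwall lemma applies, giving the asserted bound uniformly in $m\le J$; the dissipation sum is then bounded as a byproduct.
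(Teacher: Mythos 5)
Your setup is sound and, up to the substitution of the strong form, essentially identical to the paper's: testing \eqref{eq:time-discrete} with $\phi=X^j-X^{j-1}$ and then eliminating $\frac1k\|X^j-X^{j-1}\|_{\mathbb{L}^2}^2$ via $X^j-X^{j-1}=-k\,\mathcal{G}^j+\Delta_jW\,\sigma(X^{j-1})$ reproduces exactly the identity the paper obtains by testing directly with $\phi=-\Delta X^j+f(X^j,X^{j-1})$, i.e.\ \eqref{eq:time-discrete-test-1}--\eqref{eq:time-discrete-test-2}; your treatment of the base case (predictable splitting, It\^o correction bounded by $C(1+\mathcal{J}(X^{j-1}))$ via \eqref{esti:assumption-1}, the separate estimate \eqref{esti:l2-diff-time-discrete} of $\|X^j-X^{j-1}\|_{\mathbb{L}^2}^2$) also matches the paper's Step~1. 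The genuine gap is in the higher-moment step. You multiply the one-step inequality by the exact telescoping weight $P_j=\prod_{\ell=1}^r\bigl([\mathcal{J}(X^j)]^{2^{\ell-1}}+[\mathcal{J}(X^{j-1})]^{2^{\ell-1}}\bigr)$. Because the scheme is implicit, $P_j$ is not $\mathcal{F}_{t_{j-1}}$-measurable, and its deviation from its predictable part, $P_j-2^r[\mathcal{J}(X^{j-1})]^{p-1}$, carries a factor $\mathcal{J}(X^j)-\mathcal{J}(X^{j-1})$. The resulting remainders are of the form $\bigl(\mathcal{J}(X^j)-\mathcal{J}(X^{j-1})\bigr)S_j\,\Delta_jW\,(\cdots)$, and nothing on your left-hand side can absorb them: exact telescoping, $(a-b)P_j=a^p-b^p$, produces no dissipation in $|\mathcal{J}(X^j)-\mathcal{J}(X^{j-1})|^2$. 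Nor can they be absorbed into $P_j\,k\|\mathcal{G}^j\|_{\mathbb{L}^2}^2$ as you propose: Young's inequality against that term leaves a complementary term carrying $|\Delta_jW|^2/k$ times the uncontrolled square $|\mathcal{J}(X^j)-\mathcal{J}(X^{j-1})|^2$; and even for the plain noise term $\Delta_jW\,(\sigma(X^{j-1}),\mathcal{G}^j)_{\mathbb{L}^2}$, absorption into $k\|\mathcal{G}^j\|_{\mathbb{L}^2}^2$ fails outright, since the complementary term $|\Delta_jW|^2k^{-1}\|\sigma(X^{j-1})\|_{\mathbb{L}^2}^2$ has expectation of order one per step and sums to $O(1/k)$ --- which is exactly why the paper integrates by parts ($\mathcal{A}_1$) and uses the structure of $f$ ($\mathcal{A}_2$), absorbing only into the gradient and quartic difference dissipation.

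The paper's resolution is precisely to forfeit exact telescoping: at stage $r=1$ it multiplies the pathwise inequality \eqref{inq:l2-diff-time-discrete-1} by the lopsided weight $\mathcal{J}(X^j)+\frac12\mathcal{J}(X^{j-1})$ and exploits the identity $(a-b)(a+\tfrac12 b)=\tfrac34(a^2-b^2)+\tfrac14(a-b)^2$, so that the left-hand side of \eqref{inq:higher-moment-time-discrete-0} still telescopes (up to the harmless factor $\tfrac34$) \emph{and} gains the term $\tfrac14|\mathcal{J}(X^j)-\mathcal{J}(X^{j-1})|^2$. All non-predictable remainders ($\mathcal{A}_3$, $\mathcal{A}_4$, $\mathcal{A}_{5,1}$, $\mathcal{A}_{6,1}$) are then absorbed into this squared difference by Young's inequality with parameters $\theta_1+\cdots+\theta_4<\tfrac14$, the purely predictable parts $\mathcal{A}_{5,2}$, $\mathcal{A}_{6,2}$ vanish in expectation, and the general case follows by induction on $r$ (which is also how the product weight in the statement of Lemma~\ref{lem:moment-time-discrete} arises --- as an accumulated by-product of the induction, not as a multiplier chosen at the outset). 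Your proposal needs this extra source of $|\mathcal{J}(X^j)-\mathcal{J}(X^{j-1})|^2$ dissipation, or an equivalent device, to close; as written, the ``freezing remainders'' step does not go through.
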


\begin{proof} \textbf{1}. Consider \eqref{eq:time-discrete} for a fixed $\omega \in \Omega$ and choose
$\phi = -\Delta X^j(\omega) + f(X^j, X^{j-1})(\omega)$. Then one has $\mathbb{P}$-a.s.,
\begin{align}
 &\big(X^{j}-X^{j-1}, -\Delta X^j + f(X^j, X^{j-1})\big)_{\mathbb{L}^2} + k\big\|-\Delta X^j + f(X^j, X^{j-1})\big\|_{\mathbb{L}^2}^2 \notag \\
 &\quad = \Delta_j W\big( \grad \sigma(X^{j-1}), \grad X^j \big)_{{\mathbb L}^2} + \Delta_j W\big( \sigma(X^{j-1}), f(X^j, X^{j-1})\big)_{\mathbb{L}^2}=:\mathcal{A}_1 + \mathcal{A}_2.
 \label{eq:time-discrete-test-1}
\end{align}
By using the identity $(a-b)a=\frac{1}{2}\Big(|a|^2-|b|^2 + |a-b|^2\Big)\,\,\forall a,b \in \R$ along with integration by parts formula,
we calculate
\begin{align}
 \big(X^{j} &-X^{j-1}, -\Delta X^j + f(X^j, X^{j-1})\big)_{\mathbb{L}^2} \notag \\
 &= \big(\grad(X^j - X^{j-1}), \grad X^j\big)_{\mathbb{L}^2} + \frac{1}{2}\big( |X^j|^2-1, |X^j|^2-1 -(|X^{j-1}|^2-1)\big)_{\mathbb{L}^2} \notag \\
 &= \frac{1}{2}\Big( \|\grad X^j\|_{\mathbb{L}^2}^2 - \|\grad X^{j-1}\|_{\mathbb{L}^2}^2 + \|\grad (X^j-X^{j-1})\|_{\mathbb{L}^2}^2\Big) \notag \\
 & \hspace{2cm}+ \frac{1}{4} \Big( \||X^j|^2-1\|_{\mathbb{L}^2}^2 
 - \||X^{j-1}|^2-1\|_{\mathbb{L}^2}^2  + \big\| |X^j|^2- |X^{j-1}|^2\big\|_{\mathbb{L}^2}^2\Big)\notag \\
 & = \mathcal{J}(X^j)- \mathcal{J}(X^{j-1}) + \frac{1}{2} \|\grad (X^j-X^{j-1})\|_{\mathbb{L}^2}^2 + \frac{1}{4}\big\| |X^j|^2- |X^{j-1}|^2\big\|_{\mathbb{L}^2}^2. 
 \label{eq:time-discrete-test-2}
\end{align}
 Since $\sigma^{\prime}$ is bounded, we observe that
 \begin{align}
 \mathcal{A}_1 & \le \frac{1}{4} \|\grad(X^j-X^{j-1})\|_{\mathbb{L}^2}^2 + C \|\grad X^{j-1}\|_{\mathbb{L}^2}^2 |\Delta_j W|^2 
  + \big( \grad \sigma(X^{j-1}), \grad X^{j-1} \big)_{\mathbb{L}^2}\Delta_j W \notag \\
  & \le \frac{1}{4} \|\grad(X^j-X^{j-1})\|_{\mathbb{L}^2}^2 + C \mathcal{J}(X^{j-1}) |\Delta_j W|^2 
  + \big( \grad \sigma(X^{j-1}), \grad X^{j-1} \big)_{\mathbb{L}^2}\Delta_j W. \notag
 \end{align}
 We decompose $\mathcal{A}_2$ into the sum of two terms $\mathcal{A}_{2,1}$ and $\mathcal{A}_{2,2}$ where 
 \begin{align*}
  \begin{cases}
  \mathcal{A}_{2,1}= \big( \sigma(X^{j-1}), (|X^j|^2-|X^{j-1}|^2) \frac{X^j + X^{j-1}}{2}\big)_{\mathbb{L}^2} \Delta_j W \\
 \mathcal{A}_{2,2}= \big( \sigma(X^{j-1}), (|X^{j-1}|^2-1) \frac{X^j + X^{j-1}}{2}\big)_{\mathbb{L}^2} \Delta_j W .
  \end{cases}
 \end{align*}
 In view of Young's inequality and the boundedness of $\sigma$, we have 
\begin{align}
 &\mathcal{A}_{2,1} \le \frac{1}{8} \big\||X^j|^2- |X^{j-1}|^2\big\|_{\mathbb{L}^2}^2 +  C \big( \|X^j - X^{j-1}\|_{\mathbb{L}^2}^2 + \|X^{j-1}\|_{\mathbb{L}^2}^2\big) |\Delta_j W|^2, \notag \\
 & \mathcal{A}_{2,2} = \big( \sigma(X^{j-1}), (|X^{j-1}|^2-1) \frac{X^j - X^{j-1}}{2}\big)_{\mathbb{L}^2} \Delta_j W  + 
 \big( \sigma(X^{j-1}), (|X^{j-1}|^2-1)X^{j-1}\big)_{\mathbb{L}^2} \Delta_j W \notag \\
 &\qquad  \le \|X^j - X^{j-1}\|_{\mathbb{L}^2}^2 + \||X^{j-1}|^2-1\|_{\mathbb{L}^2}^2 |\Delta_j W|^2 + \big( \sigma(X^{j-1}), (|X^{j-1}|^2-1)X^{j-1}\big)_{\mathbb{L}^2} \Delta_j W \notag \\
 &\qquad  \le \|X^j - X^{j-1}\|_{\mathbb{L}^2}^2 + C \mathcal{J}(X^{j-1}) |\Delta_j W|^2 + \big( \sigma(X^{j-1}), (|X^{j-1}|^2-1)X^{j-1}\big)_{\mathbb{L}^2} \Delta_j W.\notag
\end{align}
Next we estimate $\|X^j- X^{j-1}\|_{\mathbb{L}^2}^2$ independently to bound $\mathcal{A}_{2,2}$. To do so, we choose as test function $\phi= (X^j- X^{j-1})(\omega)$ in \eqref{eq:time-discrete} and obtain 
\begin{align}
 &\|X^j - X^{j-1}\|_{\mathbb{L}^2}^2 + \frac{k}{2}\Big( \|\grad X^j\|_{\mathbb{L}^2}^2 - \|\grad X^{j-1}\|_{\mathbb{L}^2}^2 + \|\grad (X^j-X^{j-1})\|_{\mathbb{L}^2}^2\Big) \notag \\
 & \hspace{2cm}+ \frac{k}{2}\big( |X^j|^2-1, |X^j|^2- |X^{j-1}|^2\big)_{\mathbb{L}^2}
 = \big(\sigma(X^{j-1}), X^j- X^{j-1}\big)_{\mathbb{L}^2}\Delta_j W. \label{eq:l2-diff-time-discrete}
\end{align}
Note that 
\begin{equation}\label{esti:l2-diff-time-discrete-0}
 \begin{aligned}
 \frac{k}{2}\big( |X^j|^2-1, |X^j|^2- |X^{j-1}|^2\big)_{\mathbb{L}^2} &= k\Big( \psi(X^j)- \psi(X^{j-1}) + \frac{1}{4} \big\| |X^j|^2- |X^{j-1}|^2\big\|_{\mathbb{L}^2}^2\Big), \\
 \big(\sigma(X^{j-1}), X^j- X^{j-1}\big)_{\mathbb{L}^2}\Delta_j W  &\le \frac{1}{2} \|X^j - X^{j-1}\|_{\mathbb{L}^2}^2 + C\|X^{j-1}\|_{\mathbb{L}^2}^2|\Delta_j W|^2,
 \end{aligned}
\end{equation}
where in the last inequality we have used the Lipschitz continuous property of $\sigma$. We use \eqref{esti:l2-diff-time-discrete-0} in \eqref{eq:l2-diff-time-discrete} to get
\begin{align}
  &\|X^j - X^{j-1}\|_{\mathbb{L}^2}^2 \le C k \mathcal{J}(X^{j-1}) + C\|X^{j-1}\|_{\mathbb{L}^2}^2|\Delta_j W|^2. \label{esti:l2-diff-time-discrete}
\end{align}
Again, since $\mathscr{O}$ is a bounded domain, one has
\begin{align}
 \|X^{j-1}\|_{\mathbb{L}^2}^2 = \int_{\mathscr{O}} \big(|X^{j-1}|^2-1\big)\,{\rm d}x + |\mathscr{O}|
 \le C\big(1+ \frac{1}{4}\||X^{j-1}|^2-1\|_{\mathbb{L}^2}^2\big) \le C\big( 1+ \mathcal{J}(X^{j-1}\big),
 \label{esti:l2-norm-discrete-functional}
\end{align}
where $|\mathscr{O}|$ denotes the Lebesgue measure of $\mathscr{O}$. Combining the above estimates and then those for $\mathcal{A}_1$ and $\mathcal{A}_2$ in \eqref{eq:time-discrete-test-1}, and 
then \eqref{eq:time-discrete-test-2}, we obtain after taking expectation
\begin{align}
& \mathbb{E}\Big[ \mathcal{J}(X^j)-\mathcal{J}(X^{j-1})\Big] + \frac{1}{4}\mathbb{E}\Big[\|\grad (X^j-X^{j-1})\|_{\mathbb{L}^2}^2\Big] + \frac{1}{8} \mathbb{E}\Big[
\big\| |X^j|^2- |X^{j-1}|^2\big\|_{\mathbb{L}^2}^2\Big] \notag \\
& \hspace{2cm}+ k \mathbb{E}\Big[\|-\Delta X^j + f(X^j, X^{j-1})\|_{\mathbb{L}^2}^2\Big] \le Ck\Big( 1+ \mathbb{E}\big[ \mathcal{J}(X^{j-1})\big]\Big).\notag 
\end{align}
Summation over all time steps, and the discrete Gronwall's lemma then establish the assertion for $r=0$. 
\vspace{.1cm}

\textbf{2}.  In order to validate the assertion for $p=2^r, r\in \mathbb{N}^*$, we proceed inductively and illustrate the argument for $r=1$. Recall that we have from before
\begin{align}
 \mathcal{J}(X^j) &-\mathcal{J}(X^{j-1}) + \frac{1}{4} \|\grad(X^j-X^{j-1})\|_{\mathbb{L}^2}^2 + \frac{1}{8} \big\| |X^j|^2- |X^{j-1}|^2\big\|_{\mathbb{L}^2}^2  \notag \\
 & \hspace{3cm }+ 
 k \|-\Delta X^j + f(X^j, X^{j-1})\|_{\mathbb{L}^2}^2 \notag \\
 & \le C \mathcal{J}(X^{j-1})\Big( k(1+ |\Delta_j W|^2) + |\Delta_j W|^2\big(1+ |\Delta_j W|^2\big)\Big) + C|\Delta_j W|^2\big( 1+ |\Delta_j W|^2\big) \notag \\
 & \hspace{1.0cm}+ \big( \grad \sigma(X^{j-1}), \grad X^{j-1} \big)_{\mathbb{L}^2}\Delta_j W +  \big( \sigma(X^{j-1}), (|X^{j-1}|^2-1)X^{j-1}\big)_{\mathbb{L}^2} \Delta_j W.
 \label{inq:l2-diff-time-discrete-1}
\end{align}
To prove the assertion for $r=1$, one needs to multiply \eqref{inq:l2-diff-time-discrete-1} by some quantity to produce a term like
$\mathcal{J}^2(X^j)-\mathcal{J}^2(X^{j-1}) + \alpha \big| \mathcal{J}(X^j)-\mathcal{J}(X^{j-1})\big|^2$ with $\alpha>0$ on the left hand side of the inequality in order to absorb 
related terms coming from the right-hand side of the inequality before discrete 
Gronwall's lemma. Therefore, we multiply \eqref{inq:l2-diff-time-discrete-1} with $\mathcal{J}(X^j)+ \frac{1}{2} \mathcal{J}(X^{j-1})$ to get by binomial formula
\begin{align}
 \frac{3}{4} &\Big( \mathcal{J}^2(X^j)-\mathcal{J}^2(X^{j-1})\Big) + \frac{1}{4} \big| \mathcal{J}(X^j)-\mathcal{J}(X^{j-1})\big|^2  + \frac{1}{2}\big( \mathcal{J}(X^j)+ \mathcal{J}(X^{j-1})\big)
 \times \notag \\
 & \hspace{2cm}   \Big\{ \frac{1}{4} \|\grad(X^j-X^{j-1})\|_{\mathbb{L}^2}^2 + \frac{1}{8} \big\| |X^j|^2- |X^{j-1}|^2\big\|_{\mathbb{L}^2}^2 + 
 k \|-\Delta X^j + f(X^j, X^{j-1})\|_{\mathbb{L}^2}^2\Big\} \notag \\
 & \le C \mathcal{J}(X^{j-1}) \big( \mathcal{J}(X^j)+ \frac{1}{2} \mathcal{J}(X^{j-1})\big)\Big\{ k(1+ |\Delta_j W|^2) + |\Delta_j W|^2\big(1+ |\Delta_j W|^2\big)\Big\} \notag \\
 & \quad  + C\big(\mathcal{J}(X^j)+ \frac{1}{2} \mathcal{J}(X^{j-1})\big)|\Delta_j W|^2\big( 1+ |\Delta_j W|^2\big)  \notag \\
 & \qquad + \big(\mathcal{J}(X^j)+ \frac{1}{2} \mathcal{J}(X^{j-1})\big)\big( \sigma(X^{j-1}), (|X^{j-1}|^2-1)X^{j-1}\big)_{\mathbb{L}^2} \Delta_j W  \notag \\
 & \quad \qquad + \big(\mathcal{J}(X^j)+ \frac{1}{2} \mathcal{J}(X^{j-1})\big)\big( \grad \sigma(X^{j-1}), \grad X^{j-1} \big)_{\mathbb{L}^2}\Delta_j W
 := \mathcal{A}_3 + \mathcal{A}_4 + \mathcal{A}_5 + \mathcal{A}_6. \label{inq:higher-moment-time-discrete-0}
\end{align}
By Young's inequality, we have $(\theta_1, \theta_2>0)$
\begin{align}
 \mathcal{A}_3 & \le \theta_1 \big| \mathcal{J}(X^j)-\mathcal{J}(X^{j-1})\big|^2 + C(\theta_1) \mathcal{J}^2(X^{j-1})\Big\{ k(1+ |\Delta_j W|^2)
 + |\Delta_j W|^2\big(1+ |\Delta_j W|^2\big)\Big\}^2 \notag \\
 & \hspace{2cm} + C\mathcal{J}^2(X^{j-1}) \Big\{ k(1+ |\Delta_j W|^2) + |\Delta_j W|^2\big(1+ |\Delta_j W|^2\big)\Big\}, \notag \\
 \mathcal{A}_4 & \le \theta_2 \big| \mathcal{J}(X^j)-\mathcal{J}(X^{j-1})\big|^2  + C(\theta_2)|\Delta_j W|^4(1+ |\Delta_j W|^4) + C \mathcal{J}^2(X^{j-1})|\Delta_j W|^4 \notag \\
 & \hspace{4cm} + C(1+ |\Delta_j W|^4). \notag 
\end{align}
We can decompose $\mathcal{A}_6$ as 
\begin{align}
 \mathcal{A}_6 & = \big(\mathcal{J}(X^j)- \mathcal{J}(X^{j-1})\big)\big( \grad \sigma(X^{j-1}), \grad X^{j-1} \big)_{\mathbb{L}^2}\Delta_j W \notag \\
 & \hspace{1cm}+ \frac{3}{2} 
  \mathcal{J}(X^{j-1})\big)\big( \grad \sigma(X^{j-1}), \grad X^{j-1} \big)_{\mathbb{L}^2}\Delta_j W:= \mathcal{A}_{6,1}+ \mathcal{A}_{6,2}. \notag 
\end{align}
Note that $\mathbb{E}\big[\mathcal{A}_{6,2}\big]=0$. By using Young's inequality and the boundedness of $\sigma^{\prime}$, we estimate $\mathcal{A}_{6,1}$, 
\begin{align}
 \mathcal{A}_{6,1}& \le \theta_3 \big| \mathcal{J}(X^j)-\mathcal{J}(X^{j-1})\big|^2 + C(\theta_3) \|\grad X^{j-1}\|_{\mathbb{L}^2}^4 |\Delta_j W|^2 \notag \\
 & \le \theta_3 \big| \mathcal{J}(X^j)-\mathcal{J}(X^{j-1})\big|^2 + C(\theta_3) |\Delta_j W|^2 \big( 1+ \mathcal{J}^2(X^{j-1})\big). \notag 
\end{align}
Again, $\mathcal{A}_5$ can be written as $\mathcal{A}_{5,1} + \mathcal{A}_{5,2}$ with $\mathbb{E}\big[\mathcal{A}_{5,2}]=0$, where $$\mathcal{A}_{5,1}= 
\big(\mathcal{J}(X^j)-\mathcal{J}(X^{j-1})\big)\big( \sigma(X^{j-1}), (|X^{j-1}|^2-1)X^{j-1}\big)_{\mathbb{L}^2} \Delta_j W.$$
Thanks to Young's inequality, the boundedness of $\sigma$ and \eqref{esti:l2-norm-discrete-functional} we get for $\theta_4>0$
\begin{align}
 \mathcal{A}_{5,1}& \le \theta_4 \big| \mathcal{J}(X^j)-\mathcal{J}(X^{j-1})\big|^2 + C(\theta_4) \big\||X^{j-1}|^2 -1\big\|_{\mathbb{L}^2}^2 \|X^{j-1}\|_{\mathbb{L}^2}^2 |\Delta_j W|^2 \notag \\
 & \le \theta_4 \big| \mathcal{J}(X^j)-\mathcal{J}(X^{j-1})\big|^2 + C(\theta_4) \big( \mathcal{J}^2(X^{j-1})+ \|X^{j-1}\|_{\mathbb{L}^2}^4\big) |\Delta_j W|^2 \notag \\
 & \le \theta_4 \big| \mathcal{J}(X^j)-\mathcal{J}(X^{j-1})\big|^2 + C(\theta_4) \big(1+ \mathcal{J}^2(X^{j-1})\big) |\Delta_j W|^2. \notag 
\end{align}
We combine all the above estimates in \eqref{inq:higher-moment-time-discrete-0}, and choose $\theta_1, \cdots,\theta_4 >0$ with $\sum_{i=1}^4 \theta_i < \frac{1}{4}$ to have, after taking expectation 
\begin{align}
 & \mathbb{E}\Big[ \mathcal{J}^2(X^j)-\mathcal{J}^2(X^{j-1}) + C_1 \big| \mathcal{J}(X^j)-\mathcal{J}(X^{j-1})\big|^2 \Big] + C_2 \mathbb{E}\Big[\big( \mathcal{J}(X^j)+ \mathcal{J}(X^{j-1})\big) \notag \\
 & \hspace{2cm} \times \Big\{\|\grad(X^j-X^{j-1})\|_{\mathbb{L}^2}^2 +  \big\| |X^j|^2- |X^{j-1}|^2\big\|_{\mathbb{L}^2}^2 + k \|-\Delta X^j + f(X^j, X^{j-1})\|_{\mathbb{L}^2}^2\Big\}\Big] \notag \\
 & \quad \le C_3(1+k) + C_4 k \mathbb{E}\Big[ \mathcal{J}^2(X^{j-1})\Big].\label{inq:higher-moment-time-discrete-1}
\end{align}
Summation over all time steps $0\le j\le J$ in \eqref{inq:higher-moment-time-discrete-1}, together with the discrete 
Gronwall's lemma then validates the assertion of the theorem for $r=1$. This completes the proof. 
\end{proof}
We employ the bounds for arbitrary moments of $X$ in the strong norms in Lemma~\ref{lem:higer-moment-functional-strong-solun}, $\rm(i)$, and 
a weak monotonicity argument to prove the following error estimate for the solution $\{ X^j;\, 0 \leq j \leq J\}$ of \eqref{eq:time-discrete}.

\begin{thm}\label{thm:error-discrete in time}
Assume that $x \in {\mathbb W}^{2,2}_{\rm per}$, and the assumption \ref{A1} holds true. Then, for every $\delta >0$, there exist constants
$0 \leq C_{\delta} < \infty$ and $k_1=k_1(x,T)>0$ such that for all $k\le k_1$ sufficiently small
\begin{align}
\sup_{0 \leq j \leq J} {\mathbb E}\bigl[ \Vert X_{t_j} - X^j\Vert^2_{{\mathbb L}^2}\bigr] + k \sum_{j=0}^J 
{\mathbb E}\bigl[\Vert \grad(X_{t_j} - X^j) \Vert^2_{{\mathbb L}^{2}}\bigr] \leq C_{\delta} k^{1-\delta}, \notag 
\end{align}
where $\{X_t;\, t \in [0,T] \}$ solves \eqref{eq:variational-Allen-Cahn} while $\{ X^j;\, 0 \leq j \leq J\}$ solves \eqref{eq:time-discrete}.
\end{thm}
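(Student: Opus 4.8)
The plan is to test the error equation with the current error and to combine the weak monotonicity property \eqref{AD1} with the sharp increment bounds of Lemma~\ref{lem:esti-time-difference-strong-solun}. Writing $e^j := X_{t_j} - X^j$, I would first subtract the scheme \eqref{eq:time-discrete} from the variational identity \eqref{eq:variational-Allen-Cahn} taken over the interval $[t_{j-1},t_j]$, and test the resulting $\mathbb{P}$-a.s. identity with $\phi = e^j \in \mathbb{W}^{1,2}_{\rm per}$. Splitting the gradient contribution as $\int_{t_{j-1}}^{t_j}(\nabla X_s,\nabla e^j)_{\mathbb{L}^2}\,{\rm d}s - k(\nabla X^j,\nabla e^j)_{\mathbb{L}^2} = k\|\nabla e^j\|_{\mathbb{L}^2}^2 + \int_{t_{j-1}}^{t_j}(\nabla(X_s-X_{t_j}),\nabla e^j)_{\mathbb{L}^2}\,{\rm d}s$ and analogously for $D\psi$, and using the algebraic relation $f(X^j,X^{j-1}) - D\psi(X^j) = -(|X^j|^2-1)\tfrac{X^j-X^{j-1}}{2}$, I would then apply $\|e^j\|_{\mathbb{L}^2}^2 - \|e^{j-1}\|_{\mathbb{L}^2}^2 = 2(e^j-e^{j-1},e^j)_{\mathbb{L}^2} - \|e^j-e^{j-1}\|_{\mathbb{L}^2}^2$. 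The principal drift contribution is then $-k\langle \mathscr{A}(X_{t_j})-\mathscr{A}(X^j), e^j\rangle$, which by \eqref{AD1} is bounded above by $-k\|\nabla e^j\|_{\mathbb{L}^2}^2 + kK\|e^j\|_{\mathbb{L}^2}^2$; this supplies the dissipative term $k\|\nabla e^j\|_{\mathbb{L}^2}^2$ on the left and a harmless Gronwall term.

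It then remains to control three consistency contributions and the stochastic term. The gradient consistency term is handled by Young's inequality, absorbing a small multiple of $k\|\nabla e^j\|_{\mathbb{L}^2}^2$ into the dissipation and bounding the remainder by Lemma~\ref{lem:esti-time-difference-strong-solun}~(ii), which after summation costs $\mathcal{O}(k)$. The discrepancy term $k\big((|X^j|^2-1)\tfrac{X^j-X^{j-1}}{2}, e^j\big)_{\mathbb{L}^2}$ arising from the modified nonlinearity $f$ is estimated by Hölder's inequality (with $|X^j|^2-1\in\mathbb{L}^3$, $X^j-X^{j-1}\in\mathbb{L}^6$, $e^j\in\mathbb{L}^2$) and Young's inequality; the resulting weighted sum $k\sum_j\mathbb{E}[(1+\mathcal{J}(X^j))^2\|X^j-X^{j-1}\|_{\mathbb{W}^{1,2}}^2]$ is shown to be $\mathcal{O}(k)$ via the moment and increment bounds of Lemma~\ref{lem:moment-time-discrete} together with \eqref{esti:l2-diff-time-discrete} and \eqref{esti:l2-norm-discrete-functional}. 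For the stochastic term I would not pair the increment $M_j := \int_{t_{j-1}}^{t_j}(\sigma(X_s)-\sigma(X^{j-1}))\,{\rm d}W_s$ with $e^j$ directly; instead I split $e^j = e^{j-1} + (e^j-e^{j-1})$, so that $\mathbb{E}[(M_j,e^{j-1})_{\mathbb{L}^2}]=0$ by adaptedness, and absorb the cross term $(M_j,e^j-e^{j-1})_{\mathbb{L}^2}$ against the $\|e^j-e^{j-1}\|_{\mathbb{L}^2}^2$ produced by the algebraic identity, leaving $\tfrac12\mathbb{E}\|M_j\|_{\mathbb{L}^2}^2$. By Itô's isometry, \eqref{sac-1b}, and Lemma~\ref{lem:esti-time-difference-strong-solun}~(i) with $p=2$, this is $\mathcal{O}(k^2) + \mathcal{O}(k)\,\mathbb{E}\|e^{j-1}\|_{\mathbb{L}^2}^2$.

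The decisive term is the nonlinear consistency error $\int_{t_{j-1}}^{t_j}(D\psi(X_s)-D\psi(X_{t_j}), e^j)_{\mathbb{L}^2}\,{\rm d}s$, and it is here that the rate $k^{1-\delta}$ and the arbitrarily small $\delta>0$ enter. Using $|D\psi(u)-D\psi(v)|\le C(1+|u|^2+|v|^2)|u-v|$ and $\mathbb{W}^{1,2}\hookrightarrow\mathbb{L}^6$ for $d\le 3$, I would bound this by $C\int_{t_{j-1}}^{t_j}(1+\|X_s\|_{\mathbb{L}^6}^2+\|X_{t_j}\|_{\mathbb{L}^6}^2)\|X_s-X_{t_j}\|_{\mathbb{L}^2}\|e^j\|_{\mathbb{W}^{1,2}}\,{\rm d}s$, split off $\varepsilon k\|\nabla e^j\|_{\mathbb{L}^2}^2$ (absorbed into the dissipation) and a Gronwall term via Young, and be left with $C_\varepsilon\int_{t_{j-1}}^{t_j}\mathbb{E}[(1+\|X_s\|_{\mathbb{L}^6}^2+\|X_{t_j}\|_{\mathbb{L}^6}^2)^2\|X_s-X_{t_j}\|_{\mathbb{L}^2}^2]\,{\rm d}s$. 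The crucial observation is that Lemma~\ref{lem:esti-time-difference-strong-solun}~(i) gives $\mathbb{E}\|X_s-X_{t_j}\|_{\mathbb{L}^2}^{2q}\le C|s-t_j|\le Ck$ for every $q\ge 1$ at the linear rate in $k$; applying Hölder's inequality in $\Omega$ with a conjugate exponent $q$ close to $1$ and using that all moments of $\|X_t\|_{\mathbb{L}^6}$ are bounded (Lemma~\ref{lem:higer-moment-functional-strong-solun}~(i)) yields $\mathbb{E}[\cdots]\le C(Ck)^{1/q} = C_\delta k^{1-\delta}$ with $\delta = 1-1/q$. The $s$-integration and summation over $j$ turn this into the global bound $C_\delta k^{1-\delta}$.

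Finally I would collect all $\mathcal{O}(k)\,\mathbb{E}\|e^j\|_{\mathbb{L}^2}^2$ contributions, use $e^0=0$ for the time-semidiscretization, and apply the discrete Gronwall lemma for $k\le k_1$ small enough (so that $1-Ck>0$), which simultaneously produces the supremum bound on $\mathbb{E}\|e^j\|_{\mathbb{L}^2}^2$ and the bound on $k\sum_j\mathbb{E}\|\nabla e^j\|_{\mathbb{L}^2}^2$. The main obstacle is precisely the nonlinear consistency term: a crude Cauchy--Schwarz split into fourth moments of the increment would only give $\sqrt{k}$, and reaching the near-optimal $k^{1-\delta}$ relies on the fact that Lemma~\ref{lem:esti-time-difference-strong-solun}~(i) delivers \emph{all} $\mathbb{L}^2$-increment moments at the linear rate $|t-s|$, which is coupled to the uniform-in-$p$ $\mathbb{L}^6$-moment bounds through a Hölder exponent tending to $1$. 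A secondary technical point is the correct martingale treatment of the implicit stochastic term so as not to destroy the centering.
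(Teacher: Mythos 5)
Your proposal is correct and follows essentially the same route as the paper's proof: the same error identity tested with $e^j$, the weak monotonicity \eqref{AD1} supplying the dissipation and Gronwall term, martingale centering via $e^j=e^{j-1}+(e^j-e^{j-1})$ plus It\^o's isometry for the stochastic terms, the discrete moment bounds of Lemma~\ref{lem:moment-time-discrete} together with \eqref{esti:l2-diff-time-discrete} for the $f$-discrepancy, and---decisively---the H\"older-in-$\Omega$ argument with exponent tending to $1$ combined with the all-moments increment bound of Lemma~\ref{lem:esti-time-difference-strong-solun}~(i), which is precisely the paper's estimate \eqref{esti:III} yielding the rate $k^{1-\delta}$. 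The only cosmetic deviations (merging the two stochastic integrals into a single $M_j$, and the H\"older pairing $\mathbb{L}^3\times\mathbb{L}^6\times\mathbb{L}^2$ instead of the paper's $\mathbb{L}^3\times\mathbb{L}^2\times\mathbb{L}^6$ in the $f$-term) do not alter the argument.
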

The parameter $\delta>0$ which appears in Theorem~\ref{thm:error-discrete in time} is due to the non-Lipschitz drift in the problem and is caused by the estimate 
\eqref{esti:III} below.
\begin{proof}
Consider \eqref{eq:variational-Allen-Cahn} for the time interval $[t_{j-1},t_j]$, and denote $e^j := X_{t_j} - X^j$. There holds ${\mathbb P}$-a.s.~for all $\phi \in {\mathbb W}^{1,2}_{\rm per}$
\begin{equation}\label{eq:error-time-discrete}
 \begin{aligned}
  & \big(e^j- e^{j-1}, \phi\big)_{{\mathbb L}^2} + \int_{t_{j-1}}^{t_j} \Big(\bigl(\nabla [X_{t_j}-X^{j}],
\nabla \phi \bigr)_{{\mathbb L}^2} + \bigl( D\psi(X_{t_j}) - D\psi(X^j), \phi\bigr)_{{\mathbb L}^2}\Big) {\rm d}s \\
& = - \int_{t_{j-1}}^{t_j}\bigl( \nabla [X_s - X_{t_j}], \nabla \phi\bigr)_{{\mathbb L}^2}\,{\rm d}s - \int_{t_{j-1}}^{t_j} \bigl( D \psi(X_{s}) - D\psi(X_{t_j}), \phi\bigr)_{{\mathbb L}^2} {\rm d}s \\
& \quad - \frac{1}{2}\int_{t_{j-1}}^{t_j} \big((|X^j|^2-1)(X^j-X^{j-1}), \phi\big)_{\mathbb{L}^2}\,{\rm d}s
 + \int_{t_{j-1}}^{t_j} \bigl( \sigma(X_s) - \sigma(X_{t_{j-1}}), \phi\bigr)_{{\mathbb L}^2}\, {\rm d}W_s \\
& \hspace{3cm}- \int_{t_{j-1}}^{t_j} \bigl( \sigma(X_{t_{j-1}}) - \sigma(X^{j-1}), \phi\bigr)_{{\mathbb L}^2}\, {\rm d}W_s \\
& =: I_j + II_j + III_j + IV_j + V_j.
 \end{aligned}
\end{equation}
The third term on the right-hand side attributes to the use of $f(X^j, X^{j-1})$ instead of $D\psi(X^j)$ in \eqref{eq:time-discrete}. 
Choose $\phi = e^j(\omega)$, and apply expectation. By the weak monotonicity property \eqref{AD1} of the drift, the left-hand side of \eqref{eq:error-time-discrete} is then bounded from below by
\begin{eqnarray*}
 \frac{1}{2} {\mathbb E}\Bigl[ \Vert e^j\Vert^2_{{\mathbb L}^2} - \Vert e^{j-1}\Vert^2_{{\mathbb L}^2} + 
\Vert e^{j} - e^{j-1}\Vert^2_{{\mathbb L}^2} + 2k\, \bigl(\Vert \nabla e^j\Vert^2_{{\mathbb L}^2} - \Vert e^j\Vert^2_{{\mathbb L}^2}\bigr)\Bigr]\, .
\end{eqnarray*}
Because of Young's inequality and Lemma~\ref{lem:esti-time-difference-strong-solun}, $\rm (ii)$  we conclude
$$ {\mathbb E}\bigl[ I_j\bigr] \leq Ck^2 + \frac{k}{8} \mathbb{E}\Big[ \Vert \nabla e^j\Vert^2_{{\mathbb L}^2}\Big] \,.$$
Next we bound ${\mathbb E}[II_j]$. For this purpose, we use  the embedding ${\mathbb W}^{1,2} \hookrightarrow {\mathbb L}^6$ for $d \leq 3$, the algebraic identity
$a^3 - b^3 = \frac{1}{2}(a-b) \bigl((a+b)^2 + a^2 + b^2\bigr)$,  and 
Young's and H\"older's inequalities in combination with Lemma \ref{lem:esti-time-difference-strong-solun} to estimate ($\delta >0$)
\begin{align}
{\mathbb E}[II_j] & \le \frac{1}{2} \int_{t_{j-1}}^{t_j}{\mathbb E}\Bigl[ \Vert X_s - X_{t_j}\Vert_{{\mathbb L}^2}
\Vert (X_s + X_{t_j})^2 + X_s^2 + X_{t_j}^2\Vert_{{\mathbb L}^3} \Vert e^j\Vert_{{\mathbb L}^6} \Bigr] \, {\rm d}s \notag \\
&  \qquad \qquad + \int_{t_{j-1}}^{t_j} \mathbb{E}\Big[\|X_s-X_{t_j}\|_{\mathbb{L}^2}\|e^j\|_{\mathbb{L}^2}\Big]\, {\rm d}s \notag \\
&\leq Ck\,  \sup_{s \in [t_{j-1},t_j]} {\mathbb E}\Bigl[\Vert X_s - X_{t_j}\Vert_{{\mathbb L}^2}^2 
\Bigl( \Vert X_{t_j}\Vert_{{\mathbb L}^6}^4 + \Vert X_{s}\Vert_{{\mathbb L}^6}^4\Bigr) \Bigr]
+ \frac{k}{8} {\mathbb E}\Bigl[\Vert e^j\Vert^2_{{\mathbb W}^{1,2}}\Bigr] + Ck^2 \notag \\
&\leq Ck \, \sup_{s \in [t_{j-1},t_j]} \Bigl(  {\mathbb E}\Bigl[ \Vert X_s - X_{t_j}\Vert^{2(1+\delta)}_{{\mathbb L}^2}\Bigr] 
 \Bigr)^{\frac{1}{1+\delta}}\Bigl(  {\mathbb E}\Bigl[ 
\bigl( \Vert X_{t_j}\Vert_{{\mathbb W}^{1,2}}^4 + \Vert X_{s}\Vert_{{\mathbb W}^{1,2}}^4\bigr)^{\frac{1+ \delta}{\delta}}
\Bigr]\Bigr)^{\frac{\delta}{1+\delta}} \notag \\
&  + \frac{k}{8} {\mathbb E}\Bigl[\Vert \grad e^j\Vert^2_{{\mathbb L}^2}\Bigr] + Ck \mathbb{E}\big[\|e^j\|_{\mathbb{L}^2}^2\big] + Ck^2. \label{esti:III}
\end{align}
The leading factor is bounded by $Ck^{\frac{1}{1+\delta}}$ by Lemma~\ref{lem:esti-time-difference-strong-solun}, $\rm(i)$, while the second factor may
be bounded by $C_{\delta}$ due to \eqref{esti:higher-moment-w12-strong-solun}. Thus we have 
\begin{align}
 {\mathbb E}[II_j] \le C_\delta k^{\frac{2+\delta}{1+\delta}} + Ck^2 + \frac{k}{8} {\mathbb E}\Bigl[\Vert \grad e^j\Vert^2_{{\mathbb L}^{2}}\Bigr] 
 + Ck \mathbb{E}\big[\|e^j\|_{\mathbb{L}^2}^2\big]. \notag 
\end{align}
It is immediate to validate $$\bigl\vert{\mathbb E}[IV_j] \bigr\vert + \bigl\vert {\mathbb E}[V_j] \bigr\vert \leq Ck^2 +
\frac{1}{8} \Vert e^j - e^{j-1}\Vert^2_{{\mathbb L}^2} + Ck\, \Vert e^{j-1}\Vert^2_{{\mathbb L}^2}$$ by adding and subtracting $e^{j-1}$ in
the second argument and proceeding as before, and It\^{o}'s isometry in combination with \eqref{sac-1b} and Lemma~\ref{lem:esti-time-difference-strong-solun}, $\rm (i)$. 
Next we focus on the term $III_j$. In view of generalized H\"{o}lder's inequality, and the embedding ${\mathbb W}^{1,2} \hookrightarrow {\mathbb L}^6$ for $d \leq 3$,
\begin{align}
 \mathbb{E}\big[ III_j\big] & \le \frac{1}{2} \mathbb{E}\Big[ \int_{t_{j-1}}^{t_j} \|e^j\|_{\mathbb{L}^6} \|X^j-X^{j-1}\|_{\mathbb{L}^2} \||X^j|^2-1\|_{\mathbb{L}^3}\,{\rm d}s \Big] \notag \\
 & \le \frac{k}{8} \mathbb{E}\big[\|e^j\|_{\mathbb{L}^6}^2\big] + Ck \mathbb{E}\Big[\|X^j-X^{j-1}\|_{\mathbb{L}^{2}}^2\||X^j|^2-1\|_{\mathbb{L}^3}^2\Big] \notag \\
 & \le  \frac{k}{8} \mathbb{E}\big[\|e^j\|_{\mathbb{W}^{1,2}}^2\big] + Ck\mathbb{E}\Big[ \|X^j-X^{j-1}\|_{\mathbb{L}^{2}}^2 \big( 1+ \|X^j\|_{\mathbb{W}^{1,2}}^4\big)\Big]\notag \\
 & \quad \equiv \frac{k}{8} \mathbb{E}\big[\|e^j\|_{\mathbb{W}^{1,2}}^2\big] + III_{j,1}. \notag 
\end{align}
In view of Lemma \ref{lem:moment-time-discrete}, we see that
\begin{align}
\sup_j \mathbb{E}\big[\|X^j\|_{\mathbb{W}^{1,2}}^p\big] \le C \quad \text{for any}\,\,\, p\ge 2, \label{esti:higher-moment-w12-time-discrete}
\end{align}
We use \eqref{esti:l2-diff-time-discrete}, Lemma \ref{lem:moment-time-discrete} and \eqref{esti:higher-moment-w12-time-discrete} to estimate $III_{j,1}$,
\begin{align*}
 III_{j,1} & \le Ck\mathbb{E}\Big[ \big(k \mathcal{J}(X^{j-1}) + \|X^{j-1}\|_{\mathbb{L}^2}^2|\Delta_j W|^2\big)  \big( 1+ \|X^j\|_{\mathbb{W}^{1,2}}^4\big)\Big]\notag \\
 & \le Ck^2 \mathbb{E}\Big[ \mathcal{J}(X^{j-1}) + \big(\mathcal{J}(X^{j-1})\big)^2 + \|X^j\|_{\mathbb{W}^{1,2}}^8 + \|X^{j-1}\|_{\mathbb{L}^2}^2\Big] \notag \\
 & \qquad \quad + C \mathbb{E}\Big[ k\|X^{j-1}\|_{\mathbb{L}^2}^2 |\Delta_j W|^2 \|X^j\|_{\mathbb{W}^{1,2}}^4\Big] \notag \\
 & \le Ck^2 + Ck^2\mathbb{E}\Big[ \|X^j\|_{\mathbb{W}^{1,2}}^8\Big] + C \mathbb{E}\Big[ \|X^{j-1}\|_{\mathbb{L}^2}^4 |\Delta_j W|^4\Big] \le Ck^2, \notag 
\end{align*}
and therefore we obtain 
\begin{align}
 \mathbb{E}\big[ III_j\big] \le \frac{k}{8} {\mathbb E}\Bigl[\Vert \grad e^j\Vert^2_{{\mathbb L}^2}\Bigr] + Ck  \mathbb{E}\big[\|e^j\|_{\mathbb{L}^2}^2\big] + Ck^2. \notag 
\end{align}
We combine all the above estimates to have 
\begin{align}
{\mathbb E}\Bigl[ \Vert e^j\Vert^2_{{\mathbb L}^2} - \Vert e^{j-1}\Vert^2_{{\mathbb L}^2} 
+ k\Vert \nabla e^j\Vert^2_{{\mathbb L}^2}\Bigr] \le Ck^2 + C_{\delta}k^{\frac{2+\delta}{1+\delta}}
+ Ck\Big( \mathbb{E}\Big[ \|e^j\|_{\mathbb{L}^2}^2 + \|e^{j-1}\|_{\mathbb{L}^2}^2\Big]\Big).\label{esti:error-1}
\end{align}
Summation over all time steps $0 \leq j \leq J$ in \eqref{esti:error-1}, together with 
the discrete (implicit form) Gronwall's lemma then validates the assertion of the theorem.
\end{proof}
\section{Space-time discretization and strong error estimate} \label{sec:fully-discrete}
In this section, we first derive the uniform moment estimate for the discretized solution $\big\{Y^j:0\le j\le J\big\}$ of the structure preserving finite element based fully discrete scheme 
\eqref{eq:finite-element-discretization}. Then by using these uniform bounds along with Lemma \ref{lem:moment-time-discrete} we bound the error $E^j := X^j - Y^j$, where $\{ X^j;\, 0 \leq j \leq J\}$ 
solves \eqref{eq:time-discrete}. 
\vspace{.2cm}

 We define the discrete Laplacian $\Delta_h: \mathbb{V}_h\rightarrow \mathbb{V}_h$ by the variational identity 
\begin{align}
 -\big( \Delta_h \phi_h, \psi_h\big)_{\mathbb{L}^2}= \big( \nabla \phi_h, \nabla \psi_h\big)_{\mathbb{L}^2} \quad \forall \phi_h, \psi_h \in \mathbb{V}_h. \notag 
\end{align}
One can use the test function $\phi=-\Delta_h Y^j + \mathscr{P}_{\mathbb{L}^2}f(Y^j, Y^{j-1}) \in \mathbb{V}_h$ in \eqref{eq:finite-element-discretization} and proceed as in the proof of 
Lemma \ref{lem:moment-time-discrete} along with \eqref{defi:l2-projection}, the $\mathbb{W}^{1,2}$ and $\mathbb{L}^q\, (1\le q\le \infty)$-stabilities of the projection
operator $\mathscr{P}_{\mathbb{L}^2}$ (cf.~ \cite{douglas-dupont-wahlbin}) to arrive at the following uniform moment estimates for $\big\{ Y^j; 0\le j\le J\big\}$. 

\begin{lem}\label{lem:moment-space-time-discrete}
 For every $p = 2^r,\, r \in {\mathbb N}^*$, there exists a constant $C \equiv C(p,T) > 0$ such that
 \begin{align}
  &\max_{1\le j\le J} \mathbb{E}\Big[ \big|\mathcal{J}(Y^j)\big|^p \Big] + \sum_{j=1}^J \mathbb{E}\Bigg[ \prod_{\ell=1}^r \big[ [\mathcal{J}(Y^j)]^{2^{\ell-1}} + [\mathcal{J}(Y^{j-1})]^{2^{\ell-1}}\big]\times 
  \Big( \|\grad (Y^j-Y^{j-1})\|_{\mathbb{L}^2}^2  \notag \\
  & \hspace{3.9cm }+ \big\| |Y^j|^2- |Y^{j-1}|^2\big\|_{\mathbb{L}^2}^2 + k \big\|-\Delta_h Y^j + \mathscr{P}_{\mathbb{L}^2} f(Y^j, Y^{j-1})\big\|_{\mathbb{L}^2}^2\Big)\Bigg]\le C, \notag 
 \end{align}
 provided $\mathbb{E}\big[|\mathcal{J}(Y^0)|^p\big] \le C$. 
\end{lem}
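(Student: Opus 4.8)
The plan is to reproduce the proof of Lemma~\ref{lem:moment-time-discrete} essentially line by line, letting the discrete Laplacian $\Delta_h$ and the projected nonlinearity $\mathscr{P}_{\mathbb{L}^2}f(Y^j,Y^{j-1})$ play the roles of $-\Delta$ and $f(X^j,X^{j-1})$, and letting the stability properties of $\mathscr{P}_{\mathbb{L}^2}$ supply exactly those bounds that boundedness of $\sigma,\sigma'$ furnished in the continuous-in-space case. First I would test \eqref{eq:finite-element-discretization} with $\phi=-\Delta_h Y^j+\mathscr{P}_{\mathbb{L}^2}f(Y^j,Y^{j-1})\in\mathbb{V}_h$. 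The decisive observation is that the left-hand side reproduces \eqref{eq:time-discrete-test-2} verbatim: by the definition of $\Delta_h$ one has $(Y^j-Y^{j-1},-\Delta_h Y^j)_{\mathbb{L}^2}=(\grad(Y^j-Y^{j-1}),\grad Y^j)_{\mathbb{L}^2}$, while $(Y^j-Y^{j-1},\mathscr{P}_{\mathbb{L}^2}f(Y^j,Y^{j-1}))_{\mathbb{L}^2}=(Y^j-Y^{j-1},f(Y^j,Y^{j-1}))_{\mathbb{L}^2}$ since $Y^j-Y^{j-1}\in\mathbb{V}_h$ and $\mathscr{P}_{\mathbb{L}^2}$ is the $\mathbb{L}^2$-orthogonal projection \eqref{defi:l2-projection}. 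Hence the left side again equals $\mathcal{J}(Y^j)-\mathcal{J}(Y^{j-1})+\tfrac{1}{2}\|\grad(Y^j-Y^{j-1})\|_{\mathbb{L}^2}^2+\tfrac{1}{4}\big\||Y^j|^2-|Y^{j-1}|^2\big\|_{\mathbb{L}^2}^2$ plus the coercive term $k\|-\Delta_h Y^j+\mathscr{P}_{\mathbb{L}^2}f(Y^j,Y^{j-1})\|_{\mathbb{L}^2}^2$ appearing in the assertion.

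For the stochastic contribution $\Delta_j W(\sigma(Y^{j-1}),\phi)_{\mathbb{L}^2}$ I would split it as $\mathcal{A}_1+\mathcal{A}_2$ exactly as in \eqref{eq:time-discrete-test-1}. In $\mathcal{A}_1=\Delta_j W(\sigma(Y^{j-1}),-\Delta_h Y^j)_{\mathbb{L}^2}$, self-adjointness of $\mathscr{P}_{\mathbb{L}^2}$ (since $-\Delta_h Y^j\in\mathbb{V}_h$) followed by the definition of $\Delta_h$ gives $\mathcal{A}_1=\Delta_j W(\grad\mathscr{P}_{\mathbb{L}^2}\sigma(Y^{j-1}),\grad Y^j)_{\mathbb{L}^2}$; here the $\mathbb{W}^{1,2}$-stability of $\mathscr{P}_{\mathbb{L}^2}$ on the quasi-uniform family $\mathscr{T}_h$ yields $\|\grad\mathscr{P}_{\mathbb{L}^2}\sigma(Y^{j-1})\|_{\mathbb{L}^2}\le C\|\grad\sigma(Y^{j-1})\|_{\mathbb{L}^2}\le C\|\grad Y^{j-1}\|_{\mathbb{L}^2}$ by boundedness of $\sigma'$, so the bound on $\mathcal{A}_1$ is identical to the time-discrete one, with the mean-zero remainder $(\grad\mathscr{P}_{\mathbb{L}^2}\sigma(Y^{j-1}),\grad Y^{j-1})_{\mathbb{L}^2}\Delta_j W$. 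In $\mathcal{A}_2$, I would push the projection onto $\sigma$ via self-adjointness, split $|Y^j|^2-1=(|Y^j|^2-|Y^{j-1}|^2)+(|Y^{j-1}|^2-1)$, and invoke $\mathbb{L}^q$-stability with $q=\infty$, giving $\|\mathscr{P}_{\mathbb{L}^2}\sigma(Y^{j-1})\|_{\mathbb{L}^\infty}\le C\|\sigma\|_{\mathbb{L}^\infty}$; this is precisely the boundedness used for $\mathcal{A}_{2,1},\mathcal{A}_{2,2}$. The independent bound on $\|Y^j-Y^{j-1}\|_{\mathbb{L}^2}^2$ follows by testing \eqref{eq:finite-element-discretization} with $\phi=Y^j-Y^{j-1}\in\mathbb{V}_h$, reproducing \eqref{eq:l2-diff-time-discrete}--\eqref{esti:l2-diff-time-discrete}, while \eqref{esti:l2-norm-discrete-functional} needs nothing new since $\mathscr{O}$ is bounded.

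With these substitutions the base case $r=0$ follows by summation over time steps and the discrete Gronwall lemma, using the hypothesis $\mathbb{E}[|\mathcal{J}(Y^0)|^p]\le C$ to start the recursion. The inductive step is then formally identical to Step~2 of Lemma~\ref{lem:moment-time-discrete}: multiply the $r=0$ inequality by $\mathcal{J}(Y^j)+\tfrac{1}{2}\mathcal{J}(Y^{j-1})$, use the binomial identity to produce $\tfrac{3}{4}(\mathcal{J}^2(Y^j)-\mathcal{J}^2(Y^{j-1}))+\tfrac{1}{4}|\mathcal{J}(Y^j)-\mathcal{J}(Y^{j-1})|^2$ on the left, and absorb the analogues of $\mathcal{A}_3,\dots,\mathcal{A}_6$ by Young's inequality with $\sum_i\theta_i<\tfrac{1}{4}$, the martingale pieces $\mathcal{A}_{5,2},\mathcal{A}_{6,2}$ again having vanishing expectation. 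I expect no genuine obstacle beyond bookkeeping; the only real content is that the two projection stabilities convert every use of ``$\sigma$ bounded'' and ``$\sigma'$ bounded'' in the time-discrete proof into the corresponding bound for $\mathscr{P}_{\mathbb{L}^2}\sigma(Y^{j-1})$, and that \eqref{defi:l2-projection} lets the left-hand energy identity survive the finite element projection unchanged. The one point demanding care is tracking which factors are projected: the coercive term on the left must retain $\mathscr{P}_{\mathbb{L}^2}f$ (as the statement asserts), whereas in the $\|Y^j-Y^{j-1}\|_{\mathbb{L}^2}$ estimate the unprojected $f$ reappears because the test function itself lies in $\mathbb{V}_h$.
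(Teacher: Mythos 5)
Your proposal is correct and follows essentially the same route as the paper: the paper's own (sketched) proof consists precisely of testing \eqref{eq:finite-element-discretization} with $\phi=-\Delta_h Y^j+\mathscr{P}_{\mathbb{L}^2}f(Y^j,Y^{j-1})$ and repeating the argument of Lemma~\ref{lem:moment-time-discrete}, with \eqref{defi:l2-projection} and the $\mathbb{W}^{1,2}$- and $\mathbb{L}^q$-stabilities ($1\le q\le\infty$) of $\mathscr{P}_{\mathbb{L}^2}$ playing exactly the roles you assign them. Your fleshed-out details (the energy identity surviving the projection, pushing $\mathscr{P}_{\mathbb{L}^2}$ onto $\sigma$ by self-adjointness, the mean-zero martingale remainders, and the unprojected $f$ in the $\|Y^j-Y^{j-1}\|_{\mathbb{L}^2}$ bound) are all sound.
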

In view of Lemma \ref{lem:moment-space-time-discrete}, it follows that 
\begin{align}
 \sup_{0\le j\le J} \mathbb{E}\Big[ \|Y^j\|_{\mathbb{W}^{1,2}}^p\Big] \le C, \quad \forall p\ge 2. \label{esti:higher-moment-w12-space-time-discrete}
\end{align}

We have the following theorem regarding the error $E^j$ in strong norm. 

\begin{thm}\label{thm:error-estime}
Assume that $x \in {\mathbb W}^{2,2}_{\rm per}$. Then, under the assumption \ref{A1}, there exist constants $C>0$, independent of the discretization parameters $h,k >0$ and 
$k_2\equiv k_2(T, x)>0$ such that for all $k\le k_2$  sufficiently small, there holds
\begin{align*}
\sup_{0 \leq j \leq J} {\mathbb E}\Bigl[ \| X^j - Y^j\|^2_{{\mathbb L}^2}\Bigr] + k \sum_{j=0}^J 
{\mathbb E}\Bigl[\|\grad( X^j - Y^j) \|^2_{{\mathbb L}^{2}}\Bigr] \leq C\big(k+h^2\big), 
\end{align*}
where $\{ X^j;\, 0 \leq j \leq J\}$ solves \eqref{eq:time-discrete} while $\{ Y^j;\, 0 \leq j \leq J\}$ solves \eqref{eq:finite-element-discretization}.
\end{thm}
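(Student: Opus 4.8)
The plan is to estimate the purely spatial error by comparing the two schemes on the common time grid after an $\mathbb{L}^2$-projection. Since $\mathbb{V}_h\subset\mathbb{W}^{1,2}_{\rm per}$, I first test \eqref{eq:time-discrete} with an arbitrary $\phi\in\mathbb{V}_h$, subtract \eqref{eq:finite-element-discretization}, and obtain with $E^j:=X^j-Y^j$ the error relation
\begin{align*}
(E^j-E^{j-1},\phi)_{\mathbb{L}^2}+k(\nabla E^j,\nabla\phi)_{\mathbb{L}^2}+k\big(f(X^j,X^{j-1})-f(Y^j,Y^{j-1}),\phi\big)_{\mathbb{L}^2}=\Delta_jW\big(\sigma(X^{j-1})-\sigma(Y^{j-1}),\phi\big)_{\mathbb{L}^2}.
\end{align*}
I then split $E^j=\rho^j+\theta^j$ with $\rho^j:=X^j-\mathscr{P}_{\mathbb{L}^2}X^j$ and $\theta^j:=\mathscr{P}_{\mathbb{L}^2}X^j-Y^j\in\mathbb{V}_h$, and take $\phi=\theta^j$. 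By the orthogonality \eqref{defi:l2-projection} the contribution $(\rho^j-\rho^{j-1},\theta^j)_{\mathbb{L}^2}$ vanishes, so the left-hand side delivers $\tfrac12\big(\|\theta^j\|_{\mathbb{L}^2}^2-\|\theta^{j-1}\|_{\mathbb{L}^2}^2+\|\theta^j-\theta^{j-1}\|_{\mathbb{L}^2}^2\big)+k\|\nabla\theta^j\|_{\mathbb{L}^2}^2$, together with the harmless cross term $-k(\nabla\rho^j,\nabla\theta^j)_{\mathbb{L}^2}\le\tfrac{k}{8}\|\nabla\theta^j\|_{\mathbb{L}^2}^2+Ck\|\nabla\rho^j\|_{\mathbb{L}^2}^2$. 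The first-order estimate $\|\nabla\rho^j\|_{\mathbb{L}^2}\le Ch\|X^j\|_{\mathbb{W}^{2,2}}$ and the summed bound $k\sum_j\mathbb{E}\|X^j\|_{\mathbb{W}^{2,2}}^2\le C$ (a consequence of Lemma~\ref{lem:moment-time-discrete} and the polynomial growth of $f$) turn this into an $O(h^2)$ forcing; note also $\theta^0=0$ since $Y^0=\mathscr{P}_{\mathbb{L}^2}x$.

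The drift is treated through the mechanism behind \eqref{AD1}. Writing $f(y,z)=D\psi(y)-\tfrac12(|y|^2-1)(y-z)$, the difference $f(X^j,X^{j-1})-f(Y^j,Y^{j-1})$ splits into the monotone part $D\psi(X^j)-D\psi(Y^j)$ and a correction. Testing the monotone part with $E^j$ and using the pointwise inequality $(|a|^2a-|b|^2b)(a-b)\ge\tfrac12(a^2+b^2)(a-b)^2$ yields
\begin{align*}
\big(D\psi(X^j)-D\psi(Y^j),E^j\big)_{\mathbb{L}^2}\ge\tfrac12\int_{\mathscr{O}}\big(|X^j|^2+|Y^j|^2\big)|E^j|^2\,{\rm d}x-\|E^j\|_{\mathbb{L}^2}^2,
\end{align*}
so that a nonnegative quartic term stays on the left (this coercivity is the engine of the estimate) while $-k\|E^j\|_{\mathbb{L}^2}^2\le-Ck(\|\theta^j\|_{\mathbb{L}^2}^2+\|\rho^j\|_{\mathbb{L}^2}^2)$ is absorbed into the Gronwall sum and the $O(h^2)$ forcing; the piece tested against $\rho^j$ pairs random solution factors with the small $\|\rho^j\|_{\mathbb{L}^2}$ and is bounded in expectation by \eqref{esti:higher-moment-w12-time-discrete}, \eqref{esti:higher-moment-w12-space-time-discrete}. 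The stochastic term is handled by writing $\theta^j=\theta^{j-1}+(\theta^j-\theta^{j-1})$: the $\theta^{j-1}$-part has zero expectation by adaptedness, and It\^o's isometry with \eqref{sac-1b} and Young's inequality bound the remainder by $\tfrac18\mathbb{E}\|\theta^j-\theta^{j-1}\|_{\mathbb{L}^2}^2+Ck\,\mathbb{E}\|E^{j-1}\|_{\mathbb{L}^2}^2$, all with deterministic constants.

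The main obstacle is the correction term $\tfrac{k}{2}\big((|X^j|^2-1)\Delta X^j-(|Y^j|^2-1)\Delta Y^j,\theta^j\big)_{\mathbb{L}^2}$, with $\Delta X^j:=X^j-X^{j-1}$ and $\Delta Y^j:=Y^j-Y^{j-1}$; it carries indefinite, solution-dependent (random) coefficients, and the whole difficulty is to prevent such a coefficient from surviving in front of $\|\nabla\theta^j\|_{\mathbb{L}^2}^2$ or $\|\theta^j\|_{\mathbb{L}^2}^2$, since that would block both absorption and the discrete Gronwall step. I would decompose it as $\tfrac{k}{2}\big((|X^j|^2-1)(E^j-E^{j-1}),\theta^j\big)_{\mathbb{L}^2}+\tfrac{k}{2}\big((X^j+Y^j)E^j\,\Delta Y^j,\theta^j\big)_{\mathbb{L}^2}$; in the second summand the two small factors $E^j$ and $\Delta Y^j$, combined with the explicit $k$, make it genuinely higher order once $\theta^j$ is placed (via $\mathbb{W}^{1,2}\hookrightarrow\mathbb{L}^6$) in the slot receiving the deterministic coefficient $\tfrac{k}{8}$ in $\|\nabla\theta^j\|_{\mathbb{L}^2}^2$ and the random polynomial factors are paired with $\Delta Y^j$. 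For the first summand one writes $E^j-E^{j-1}=(\theta^j-\theta^{j-1})+(\rho^j-\rho^{j-1})$: the $\rho$-increment behaves like a solution increment and is lower order, while the genuinely delicate term $\tfrac{k}{2}\big((|X^j|^2-1)(\theta^j-\theta^{j-1}),\theta^j\big)_{\mathbb{L}^2}$ must be controlled by spending the $\tfrac12\|\theta^j-\theta^{j-1}\|_{\mathbb{L}^2}^2$ reserve on the left against the coercive quartic and the uniform moments — this is the step I expect to be the most delicate. Throughout, the random factors are forced to multiply either $\|\rho^j\|_{\mathbb{L}^2}^2$ (which is $O(h^2)$) or the increments $\Delta X^j,\Delta Y^j$, whose second and fourth moments are $O(k)$ and $O(k^2)$ by \eqref{esti:l2-diff-time-discrete} and its $Y$-analogue from Lemma~\ref{lem:moment-space-time-discrete} sharpened through the higher moments of $\mathcal{J}(X^j),\mathcal{J}(Y^j)$; a Cauchy--Schwarz splitting in $\omega$ then renders these contributions $O(k^2)$ per step, hence $O(k)$ after summation.

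Collecting all contributions, taking expectations and summing over $1\le j\le J$ produces a recursion
\begin{align*}
\mathbb{E}\|\theta^j\|_{\mathbb{L}^2}^2-\mathbb{E}\|\theta^{j-1}\|_{\mathbb{L}^2}^2+c\,k\,\mathbb{E}\|\nabla\theta^j\|_{\mathbb{L}^2}^2\le Ck\,\mathbb{E}\|\theta^{j-1}\|_{\mathbb{L}^2}^2+g_j,\qquad \sum_{j}g_j\le C(k+h^2),
\end{align*}
with $\theta^0=0$. The discrete (implicit) Gronwall lemma then yields $\sup_j\mathbb{E}\|\theta^j\|_{\mathbb{L}^2}^2+k\sum_j\mathbb{E}\|\nabla\theta^j\|_{\mathbb{L}^2}^2\le C(k+h^2)$, and the triangle inequality together with the projection bounds $\sup_j\mathbb{E}\|\rho^j\|_{\mathbb{L}^2}^2\le Ch^2$ and $k\sum_j\mathbb{E}\|\nabla\rho^j\|_{\mathbb{L}^2}^2\le Ch^2$ gives the asserted estimate for $E^j=X^j-Y^j$.
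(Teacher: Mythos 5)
Your proposal is correct and essentially reproduces the paper's own proof: your $\theta^j=\mathscr{P}_{\mathbb{L}^2}X^j-Y^j$ coincides with the paper's test function $\mathscr{P}_{\mathbb{L}^2}E^j$ (since $Y^j\in\mathbb{V}_h$), your pointwise inequality is precisely the mechanism behind \eqref{AD1}, and your handling of the projection terms, the $f$-versus-$D\psi$ correction, and the noise term matches the paper's estimates of $\mathbf{B}_{1,j}$ through $\mathbf{B}_{4,j}$, including the key devices (the summed bound $k\sum_j\mathbb{E}\|\Delta X^j\|_{\mathbb{L}^2}^2\le C$ from Lemma~\ref{lem:moment-time-discrete}, absorption of $\|\theta^j-\theta^{j-1}\|_{\mathbb{L}^2}^2$, and Cauchy--Schwarz in $\omega$ against the $O(k^2)$ fourth moments of the increments). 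One caution on your "second summand" $\tfrac{k}{2}\bigl((X^j+Y^j)E^j(Y^j-Y^{j-1}),\theta^j\bigr)_{\mathbb{L}^2}$: you must recombine $(X^j+Y^j)E^j=|X^j|^2-|Y^j|^2$ and bound $\||X^j|^2-|Y^j|^2\|_{\mathbb{L}^3}\le\|X^j\|_{\mathbb{L}^6}^2+\|Y^j\|_{\mathbb{L}^6}^2$ crudely (exactly as the paper does for its analogous term $\mathbf{B}_{3,j}^1$), so that only the single error factor $\theta^j$ survives and receives the deterministic Young weight; treating $E^j$ there as a second "small" error factor would leave a random coefficient multiplying $\|\theta^j\|_{\mathbb{W}^{1,2}}^2$, which could be neither absorbed into the left-hand side nor handled by the discrete Gronwall lemma.
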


\begin{proof}
 We subtract \eqref{eq:finite-element-discretization} from \eqref{eq:time-discrete}, and restrict to the test functions $\phi \in {\mathbb V}_h$. Choosing
$\phi = {\mathscr P}_{{\mathbb L}^2} E^j(\omega)$, and using \eqref{defi:l2-projection}, we obtain

\begin{align}
&\frac{1}{2} {\mathbb E}\Bigl[\Vert {\mathscr P}_{{\mathbb L}^2} E^j\Vert^2_{{\mathbb L}^2}
- \Vert {\mathscr P}_{{\mathbb L}^2} E^{j-1}\Vert^2_{{\mathbb L}^2} + 
\Vert {\mathscr P}_{{\mathbb L}^2} [E^j-E^{j-1}]\Vert^2_{{\mathbb L}^2}\Big] \notag \\
& \qquad  + k\mathbb{E}\Big[ \big( \grad E^j, \grad E^j\big)_{\mathbb{L}^2} + 
\big( D\psi(X^j)-D\psi(Y^j), E^j\big)_{\mathbb{L}^2}\Big]  \notag \\
& = k\mathbb{E}\Big[ \big( \grad E^j, \grad( E^j-\mathscr{P}_{\mathbb{L}^2} E^j)\big)_{\mathbb{L}^2} + 
\big( D\psi(X^j)-D\psi(Y^j), E^j-\mathscr{P}_{\mathbb{L}^2} E^j\big)_{\mathbb{L}^2}\Big]  \notag \\
& \qquad + k \mathbb{E}\Big[ \Big( (|X^j|^2-1) \frac{X^j -X^{j-1}}{2} - (|Y^j|^2-1) \frac{Y^j -Y^{j-1}}{2}, \mathscr{P}_{\mathbb{L}^2} E^j\Big)_{\mathbb{L}^2}\Big] \notag \\
&\qquad \quad   +  {\mathbb E}\Bigl[ \Bigl(\sigma(X^{j-1}) -  \sigma(Y^{j-1}), 
 {\mathscr P}_{{\mathbb L}^2} [E^j- E^{j-1}]\Bigr)_{{\mathbb L}^2} \Delta_j W\Bigr] \notag \\
 & \le \frac{k}{2} \mathbb{E}\Big[\|\grad E^j\|_{\mathbb{L}^2}^2\Big] + \frac{k}{2}\mathbb{E}\Big[ \|\grad(X^j
 -\mathscr{P}_{\mathbb{L}^2}X^j)\|_{\mathbb{L}^2}^2\Big] - k \mathbb{E}\Big[ \|E^j\|_{\mathbb{L}^2}^2\Big] \notag  \\
 & \quad  + k \mathbb{E}\Big[ \|\mathscr{P}_{\mathbb{L}^2}E^j\|_{\mathbb{L}^2}^2\Big] 
 + \Big|\mathbb{E}\Big[\Big(|X^j|^2X^j-|Y^j|^2Y^j, X^j -\mathscr{P}_{\mathbb{L}^2}X^j\Big)_{\mathbb{L}^2}\Big]\Big| \notag \\
 &\qquad \quad  + k \mathbb{E}\Big[ \Big( (|X^j|^2-1) \frac{X^j -X^{j-1}}{2} - (|Y^j|^2-1) \frac{Y^j -Y^{j-1}}{2}, \mathscr{P}_{\mathbb{L}^2} E^j\Big)_{\mathbb{L}^2}\Big] \notag \\
 &\qquad \qquad + {\mathbb E}\Bigl[ \Bigl(\sigma(X^{j-1}) -  \sigma(Y^{j-1}), 
 {\mathscr P}_{{\mathbb L}^2} [E^j- E^{j-1}]\Bigr)_{{\mathbb L}^2} \Delta_j W\Bigr] \notag.
\end{align} 
Note that the third term on the right-hand side of the first equality reflects that $f(X^j, X^{j-1})$ is a perturbation of $D\psi(X^j)$. 
By the weak monotonicity property \eqref{AD1}, we see that 
\begin{align*}
 \mathbb{E}\Big[\|\grad E^j\|_{\mathbb{L}^2}^2 - \|E^j\|_{\mathbb{L}^2}^2\Big] \le \mathbb{E}\Big[ \big( \grad E^j, \grad E^j\big)_{\mathbb{L}^2} + 
\big( D\psi(X^j)-D\psi(Y^j), E^j\big)_{\mathbb{L}^2}\Big],
\end{align*}
and therefore we arrive at the following inequality
\begin{align}
&\frac{1}{2} {\mathbb E}\Bigl[ \Bigl( \Vert {\mathscr P}_{{\mathbb L}^2} E^j\Vert^2_{{\mathbb L}^2} - \Vert {\mathscr P}_{{\mathbb L}^2} E^{j-1}\Vert^2_{{\mathbb L}^2}\Bigr) + 
\Vert {\mathscr P}_{{\mathbb L}^2} [E^j-E^{j-1}]\Vert^2_{{\mathbb L}^2}  + k \Vert \nabla E^j\Vert^2_{{\mathbb L}^2}\Bigr] \notag \\
& \leq Ck  {\mathbb E}\bigl[\Vert {\mathscr P}_{{\mathbb L}^2} E^j\Vert^2_{{\mathbb L}^2}\bigr] + C k \mathbb{E}\Big[
\|\grad(X^j - {\mathscr P}_{{\mathbb L}^2} X^j)\|_{{\mathbb L}^2}^2\Big]  \notag \\
 & \quad + Ck \Bigl\vert {\mathbb E}\Bigl[\Bigl( \vert X^j\vert^2 X^j - \vert Y^j\vert^2 Y^j, X^j - {\mathscr P}_{{\mathbb L}^2} X^j\Bigr)_{{\mathbb L}^2}\Bigr]\Big| \notag \\
 & \qquad + k \mathbb{E}\Big[ \Big( (|X^j|^2-1) \frac{X^j -X^{j-1}}{2} - (|Y^j|^2-1) \frac{Y^j -Y^{j-1}}{2}, \mathscr{P}_{\mathbb{L}^2} E^j\Big)_{\mathbb{L}^2}\Big] \notag \\
&\qquad \qquad  +  {\mathbb E}\Bigl[ \Bigl(\sigma(X^{j-1}) -  \sigma(Y^{j-1}), {\mathscr P}_{{\mathbb L}^2} [E^j- E^{j-1}]\Bigr)_{{\mathbb L}^2} \Delta_j W\Bigr] \notag  \\
 &\quad =:Ck  {\mathbb E}\bigl[\Vert {\mathscr P}_{{\mathbb L}^2} E^j\Vert^2_{{\mathbb L}^2}\bigr] + 
 \mathbf{B}_{1,j} + \mathbf{B}_{2,j} + \mathbf{B}_{3,j} + \mathbf{B}_{4,j} . \label{inq:error-1} 
\end{align}
Note that, in view of Lemma \ref{lem:moment-time-discrete}, Young's inequality and the embedding ${\mathbb W}^{1,2} \hookrightarrow {\mathbb L}^6$ for $d \leq 3$
\begin{align*}
\mathbb{E}\Big[\|f(X^j, X^{j-1})\|_{\mathbb{L}^2}^2\Big] & \le C \mathbb{E}\Big[\int_{\mathscr{O}} (|X^j|^4 +1)(|X^j|^2 + |X^{j-1}|^2)\,dx\Big] \\
& \le C \mathbb{E}\Big[\int_{\mathscr{O}} \big(|X^j|^6 + |X^{j-1}|^6 + |X^j|^2 + |X^{j-1}|^2\big)\,dx \Big] \\
  &\le C\Big(1+ \sup_{j}\mathbb{E}\big[| \mathcal{J}(X^j)|^8\big]\Big).
\end{align*}
Thus using Lemma \ref{lem:moment-time-discrete} and the estimate above, we see that 
\begin{align}
 & k\sum_{j=1}^J \mathbb{E}\big[\|\Delta X^j\|_{\mathbb{L}^2}^2\big] \notag \\
 & \le k\sum_{j=1}^J \mathbb{E}\big[\|-\Delta X^j + f(X^j, X^{j-1})\|_{\mathbb{L}^2}^2\big] + 
 k\sum_{j=1}^J\mathbb{E}\Big[\|f(X^j, X^{j-1})\|_{\mathbb{L}^2}^2\Big] \le C. \label{esti:sum-l2-laplace-time-discrete}
\end{align}

Let us recall the following well-known properties of $\mathscr{P}_{\mathbb{L}^2}$, see \cite{BS1}
 \begin{align}\label{esti:projection-h}
  \begin{cases}
   \|g-\mathscr{P}_{\mathbb{L}^2} g\|_{\mathbb{L}^2}\le C h \|g\|_{\mathbb{W}^{1,2}} \quad \forall  g \in \mathbb{W}^{1,2}, \\
    \|g-\mathscr{P}_{\mathbb{L}^2} g\|_{\mathbb{L}^2} + h  \|\nabla[g-\mathscr{P}_{\mathbb{L}^2} g]\|_{\mathbb{L}^2}\le C h^2 \|\Delta g\|_{\mathbb{L}^2} \quad \forall g\in \mathbb{W}^{2,2}.
  \end{cases}
 \end{align}
We use \eqref{esti:sum-l2-laplace-time-discrete} and \eqref{esti:projection-h} to infer that
\begin{align}
 \sum_{j=1}^J \mathbf{B}_{1,j} \leq C h^2\sum_{j=1}^J k \mathbb{E}\Big[\|\Delta X^j\|_{\mathbb{L}^2}^2\Big] \le Ch^2.\notag 
 \end{align}
 Next we estimate $\sum_{j=1}^{J} \mathbf{B}_{4,j}$. A simple approximation argument, \eqref{sac-1b}, and \eqref{esti:sum-l2-laplace-time-discrete} together 
 with Young's inequality lead to 
 \begin{align}
  \sum_{j=1}^{J} \mathbf{B}_{4,j} & \le \sum_{j=1}^J \mathbb{E}\Big[ 
  \|\sigma(X^{j-1})-\sigma(Y^{j-1})\|_{\mathbb{L}^2}\|{\mathscr P}_{{\mathbb L}^2} [E^j- E^{j-1}]\|_{\mathbb{L}^2}
  |\Delta_j W|\Big] \notag \\
  & \le \frac{1}{4}\sum_{j=1}^J \mathbb{E}\Big[\|{\mathscr P}_{{\mathbb L}^2} [E^j- E^{j-1}]\|_{\mathbb{L}^2}^2\Big] + C
  \sum_{j=1}^J k \mathbb{E}\Big[\|E^{j-1}\|_{\mathbb{L}^2}^2\Big]\notag \\
  & \le \frac{1}{4}\sum_{j=1}^J \mathbb{E}\Big[\|{\mathscr P}_{{\mathbb L}^2} [E^j- E^{j-1}]\|_{\mathbb{L}^2}^2\Big] 
  + Ck \sum_{j=1}^J \mathbb{E}\Big[ \|\mathscr{P}_{\mathbb{L}^2} E^{j-1}\|_{\mathbb{L}^2}^2 + \|X^{j-1}-\mathscr{P}_{\mathbb{L}^2} X^{j-1}\|_{\mathbb{L}^2}^2\Big]\notag \\
  & \le  \frac{1}{4} \sum_{j=1}^J {\mathbb E}\bigl[\Vert {\mathscr P}_{{\mathbb L}^2}[E^j - E^{j-1}]\Vert^2_{{\mathbb L}^2}\bigr] +
C\Bigl(h^4 + k\sum_{j=1}^J {\mathbb E}\bigl[\Vert {\mathscr P}_{{\mathbb L}^2}E^{j-1}\Vert^2_{{\mathbb L}^2}\bigr]\Bigr).\notag
 \end{align}
We now bound the term $\mathbf{B}_{2,j}$. We use the algebraic formula given before \eqref{esti:III}, the embedding ${\mathbb W}^{1,2}
\hookrightarrow {\mathbb L}^6$ for $d \leq 3$, and a generalized Young's inequality to have
\begin{align}
 \mathbf{J}_{2,j} & \le Ck \mathbb{E}\Big[ \|E^j\|_{\mathbb{L}^6}\big(\|X^j\|_{\mathbb{L}^6}^2 + \|Y^j\|_{\mathbb{L}^6}^2\big)\|X^j -\mathscr{P}_{\mathbb{L}^2}X^j\|_{\mathbb{L}^2}\Big] \notag \\
 & \le Ck \mathbb{E}\Big[ \|E^j\|_{\mathbb{W}^{1,2}}\big(\|X^j\|_{\mathbb{W}^{1,2}}^2 + \|Y^j\|_{\mathbb{W}^{1,2}}^2\big)
 \|X^j -\mathscr{P}_{\mathbb{L}^2}X^j\|_{\mathbb{L}^2}\Big] \notag \\
 & \le \frac{k}{8}\mathbb{E}\Big[\|\grad E^j\|_{\mathbb{L}^2}^2\Big] + \frac{k}{8}\mathbb{E}\Big[ \| E^j\|_{\mathbb{L}^2}^2\Big]
 + Ck\mathbb{E}\Big[ \Big(\|X^j\|_{\mathbb{W}^{1,2}}^4 + \|Y^j\|_{\mathbb{W}^{1,2}}^4\Big)\|X^j -\mathscr{P}_{\mathbb{L}^2}X^j\|_{\mathbb{L}^2}^2\Big] \notag \\
 & =: \frac{k}{8}\mathbb{E}\Big[\|\grad E^j\|_{\mathbb{L}^2}^2\Big] + \mathbf{B}_{2,j}^1 + \mathbf{B}_{2,j}^2. \notag 
\end{align}
Thanks to \eqref{esti:higher-moment-w12-time-discrete} and \eqref{esti:projection-h}, we note that
\begin{align*}
 \sum_{j=0}^J \mathbf{B}_{2,j}^1 \le \sum_{j=0}^J k \mathbb{E}\Big[\|\mathscr{P}_{\mathbb{L}^2}E^j\|_{\mathbb{L}^2}^2\Big] 
 + Ch^2 k\sum_{j=0}^J \mathbb{E}\Big[\| X^j\|_{\mathbb{W}^{1,2}}^2\Big] 
  \le Ch^2 + \sum_{j=0}^J k \mathbb{E}\Big[\|\mathscr{P}_{\mathbb{L}^2}E^j\|_{\mathbb{L}^2}^2\Big].
\end{align*}
We use \eqref{esti:higher-moment-w12-time-discrete}, \eqref{esti:higher-moment-w12-space-time-discrete} and \eqref{esti:projection-h}, together with 
Young's inequality to get
\begin{align}
 \sum_{j=0}^J \mathbf{B}_{2,j}^2 & \le Ch^2 k\sum_{j=0}^J \mathbb{E}\Big[ \big(\|X^j\|_{\mathbb{W}^{1,2}}^4 + \|Y^j\|_{\mathbb{W}^{1,2}}^4\big)
 \|X^j\|_{\mathbb{W}^{1,2}}^2\Big] \notag \\
 & \le Ch^2 k\sum_{j=0}^J \mathbb{E}\Big[ \|X^j\|_{\mathbb{W}^{1,2}}^8 + \|Y^j\|_{\mathbb{W}^{1,2}}^8 + 
 \|X^j\|_{\mathbb{W}^{1,2}}^4\Big] \le Ch^2. \notag 
\end{align}
It remains to bound $\mathbf{B}_{3,j}$. We decompose $\mathbf{B}_{3,j}$ as follows.
\begin{align}
\mathbf{B}_{3,j} & =  \frac{k}{2} \mathbb{E}\Big[\Big( (|X^j|^2-|Y^j|^2)(X^j-X^{j-1}), \mathscr{P}_{\mathbb{L}^2} E^j\Big)_{\mathbb{L}^2}\Big]  \notag \\
&  \quad \quad + \frac{k}{2} \mathbb{E}\Big[\Big( (|Y^j|^2-1)(E^j-E^{j-1}), \mathscr{P}_{\mathbb{L}^2} E^j\Big)_{\mathbb{L}^2}\Big]=: 
 \mathbf{B}_{3,j}^1 + \mathbf{B}_{3,j}^2. \notag 
\end{align}
Thanks to generalized H\"{o}lder's inequality, the $L^q (1\le q\le \infty)$-stability of $\mathscr{P}_{\mathbb{L}^2}$, the embedding 
${\mathbb W}^{1,2} \hookrightarrow {\mathbb L}^6$ for $d \leq 3$, the estimates \eqref{esti:projection-h}, \eqref{esti:higher-moment-w12-space-time-discrete} and 
\eqref{esti:higher-moment-w12-time-discrete}, we have 
\begin{align}
 \mathbf{B}_{3,j}^2& \le Ck \mathbb{E}\Big[ \|E^j-E^{j-1}\|_{\mathbb{L}^2} \|E^j\|_{\mathbb{W}^{1,2}}\||Y^j|^2-1\|_{\mathbb{L}^3}\Big] \notag \\
 & \le Ck \mathbb{E}\Big[\Big( \|\mathscr{P}_{\mathbb{L}^2}(E^j-E^{j-1})\|_{\mathbb{L}^2} + \|X^j-\mathscr{P}_{\mathbb{L}^2} X^j -
 (X^{j-1}-\mathscr{P}_{\mathbb{L}^2} X^{j-1})\|_{\mathbb{L}^2}\Big) \notag \\
 & \hspace{3cm}\times \|E^j\|_{\mathbb{W}^{1,2}}\||Y^j|^2-1\|_{\mathbb{L}^3}\Big] \notag \\
 & \le \frac{k}{16} \mathbb{E}\big[\| E^j\|_{\mathbb{W}^{1,2}}^2\big]
 + Ck \mathbb{E}\Big[ \|\mathscr{P}_{\mathbb{L}^2}(E^j-E^{j-1})\|_{\mathbb{L}^2}^{1+1}\||Y^j|^2-1\|_{\mathbb{L}^3}^2\Big] \notag \\
 & \qquad \quad+ Ck \mathbb{E}\Big[ \|X^j-\mathscr{P}_{\mathbb{L}^2} X^j -
 (X^{j-1}-\mathscr{P}_{\mathbb{L}^2} X^{j-1})\|_{\mathbb{L}^2}^2 \||Y^j|^2-1\|_{\mathbb{L}^3}^2\Big] \notag \\
 & \le \frac{k}{16} \mathbb{E}\big[\|\grad E^j\|_{\mathbb{L}^2}^2\big] + Ck \mathbb{E}\big[\|\mathscr{P}_{\mathbb{L}^2} E^j\|_{\mathbb{L}^2}^2\big] 
 + \frac{1}{8} \mathbb{E}\Big[\|\mathscr{P}_{\mathbb{L}^2}(E^j-E^{j-1})\|_{\mathbb{L}^2}^2\Big] \notag \\
 & \qquad \quad  + Ck^2 \mathbb{E}\Big[\|\mathscr{P}_{\mathbb{L}^2}(E^j-E^{j-1})\|_{\mathbb{L}^2}^2\||Y^j|^2-1\|_{\mathbb{L}^3}^4\Big]
 + Ckh^2 \mathbb{E}\big[\|X^j\|_{\mathbb{W}^{1,2}}^2\big]  \notag \\
  & \qquad \qquad + Ck h^2 \mathbb{E}\Big[ \Big(\|X^j\|_{\mathbb{W}^{1,2}}^2 + \|X^{j-1}\|_{\mathbb{W}^{1,2}}^2\Big) \||Y^j|^2-1\|_{\mathbb{L}^3}^2\Big] \notag \\ 
  & \le \frac{k}{16} \mathbb{E}\big[\|\grad E^j\|_{\mathbb{L}^2}^2\big]  + \frac{1}{8} \mathbb{E}\Big[\|\mathscr{P}_{\mathbb{L}^2}(E^j-E^{j-1})\|_{\mathbb{L}^2}^2\Big] 
  + Ck \mathbb{E}\big[\|\mathscr{P}_{\mathbb{L}^2} E^j\|_{\mathbb{L}^2}^2\big] + Ckh^2  \notag \\
 & \qquad + Ck^2 \mathbb{E}\Big[\|E^j-E^{j-1}\|_{\mathbb{L}^2}^2(1+ \|Y^j\|_{\mathbb{W}^{1,2}}^8)\Big] \notag \\
  & \qquad \qquad + Ck h^2 \mathbb{E}\Big[1+ \|X^j\|_{\mathbb{W}^{1,2}}^4 + \|X^{j-1}\|_{\mathbb{W}^{1,2}}^4 +  \|Y^j\|_{\mathbb{W}^{1,2}}^8\Big] \notag \\ 
  & \le \frac{k}{16} \mathbb{E}\big[\|\grad E^j\|_{\mathbb{L}^2}^2\big]  + \frac{1}{8} \mathbb{E}\Big[\|\mathscr{P}_{\mathbb{L}^2}(E^j-E^{j-1})\|_{\mathbb{L}^2}^2\Big] 
  + Ck \mathbb{E}\big[\|\mathscr{P}_{\mathbb{L}^2} E^j\|_{\mathbb{L}^2}^2\big] 
 + Ck(h^2 + k).\notag 
\end{align}
Next we estimate $\mathbf{B}_{3,j}^1$. We use the generalized H\"{o}lder's inequality, the $L^q (1\le q\le \infty)$-stability of $\mathscr{P}_{\mathbb{L}^2}$, the embedding 
${\mathbb W}^{1,2} \hookrightarrow {\mathbb L}^6$ for $d \leq 3$, Young's inequality, the estimates \eqref{esti:higher-moment-w12-space-time-discrete}, \eqref{esti:projection-h} and 
 \eqref{esti:l2-diff-time-discrete}, \eqref{esti:l2-norm-discrete-functional} and \eqref{esti:higher-moment-w12-time-discrete}, along with 
Lemma \ref{lem:moment-time-discrete} to get
\begin{align}
 \mathbf{B}_{3,j}^1 & \le Ck \mathbb{E}\Big[ \|\mathscr{P}_{\mathbb{L}^2} E^j\|_{\mathbb{L}^6} \|X^j-X^{j-1}\|_{\mathbb{L}^2} 
 \big\||X^j|^2 - |Y^j|^2\big\|_{\mathbb{L}^3} \Big] \notag \\
 & \le  Ck \mathbb{E}\Big[ \| E^j\|_{\mathbb{W}^{1,2}} \|X^j-X^{j-1}\|_{\mathbb{L}^2} 
 \big(\|X^j\|_{\mathbb{L}^6}^2 +\|Y^j\|_{\mathbb{L}^6}^2\big) \Big] \notag \\
 & \le \frac{k}{16} \mathbb{E}\big[\|\grad E^j\|_{\mathbb{L}^2}^2\big] + \frac{k}{16} \mathbb{E}\big[\| E^j\|_{\mathbb{L}^2}^2\big] 
 + Ck\mathbb{E}\Big[\|X^j-X^{j-1}\|_{\mathbb{L}^2}^2\big(\|X^j\|_{\mathbb{L}^6}^2 +\|Y^j\|_{\mathbb{L}^6}^2\big)^2 \Big] \notag \\
 & \le \frac{k}{16} \mathbb{E}\big[\|\grad E^j\|_{\mathbb{L}^2}^2\big] + Ck \mathbb{E}\big[\|\mathscr{P}_{\mathbb{L}^2} E^j\|_{\mathbb{L}^2}^2\big] 
 + Ckh^2 \mathbb{E}\big[\|X^j\|_{\mathbb{W}^{1,2}}^2\big]  \notag \\
 & \hspace{1cm} + Ck^2 \mathbb{E}\Big[\|X^j\|_{\mathbb{W}^{1,2}}^8 + \|Y^j\|_{\mathbb{W}^{1,2}}^8\Big]+ C \mathbb{E}\Big[\|X^j-X^{j-1}\|_{\mathbb{L}^2}^4\Big] \notag \\
 & \le \frac{k}{16} \mathbb{E}\big[\|\grad E^j\|_{\mathbb{L}^2}^2\big] + Ck \mathbb{E}\big[\|\mathscr{P}_{\mathbb{L}^2} E^j\|_{\mathbb{L}^2}^2\big] 
 + Ck(h^2 + k) \notag \\
 & \hspace{2cm}+ C \mathbb{E}\Big[ k^2\mathcal{J}^2(X^{j-1}) + |\Delta_j W|^4 \|X^{j-1}\|_{\mathbb{L}^2}^4\Big] \notag \\
 & \le \frac{k}{16} \mathbb{E}\big[\|\grad E^j\|_{\mathbb{L}^2}^2\big] + Ck \mathbb{E}\big[\|\mathscr{P}_{\mathbb{L}^2} E^j\|_{\mathbb{L}^2}^2\big] + Ck(h^2 + k) \notag \\
 & \hspace{2.5cm}+ C \mathbb{E}\Big[ k^2\mathcal{J}^2(X^{j-1}) + |\Delta_j W|^4 \big( 1+ \mathcal{J}^2(X^{j-1}\big)\Big] \notag \\
 & \le \frac{k}{16} \mathbb{E}\big[\|\grad E^j\|_{\mathbb{L}^2}^2\big] + Ck \mathbb{E}\big[\|\mathscr{P}_{\mathbb{L}^2} E^j\|_{\mathbb{L}^2}^2\big] 
 + Ck(h^2 + k) + C k^2\big( 1+ \mathbb{E}\big[\mathcal{J}^2(X^{j-1})\big]\big) \notag \\
 & \le \frac{k}{16} \mathbb{E}\big[\|\grad E^j\|_{\mathbb{L}^2}^2\big] + Ck \mathbb{E}\big[\|\mathscr{P}_{\mathbb{L}^2} E^j\|_{\mathbb{L}^2}^2\big] 
 + Ck(h^2 + k).\notag 
\end{align}
 Putting things together in \eqref{inq:error-1} and using the discrete Gronwall's lemma (implicit form) then yields 
\begin{align}
 \sup_{0\le j\le J}\mathbb{E}\Big[\|\mathscr{P}_{\mathbb{L}^2}E^j\|_{\mathbb{L}^2}^2\Big] + k \sum_{j=0}^J 
{\mathbb E}\bigl[\|\grad( X^j - Y^j) \|^2_{{\mathbb L}^{2}}\bigr] \leq C(k+h^2).\label{inq:error-2}
\end{align}
Thus, thanks to  \eqref{esti:higher-moment-w12-time-discrete}, \eqref{esti:projection-h} and \eqref{inq:error-2}, we conclude that 
\begin{align}
 &\sup_{0\le j\le J}\mathbb{E}\Big[\|X^j- Y^j\|_{\mathbb{L}^2}^2\Big] + k \sum_{j=0}^J 
{\mathbb E}\bigl[\|\grad( X^j - Y^j) \|^2_{{\mathbb L}^{2}}\bigr] \notag \\
& \le \sup_{0\le j\le J}\mathbb{E}\Big[\|\mathscr{P}_{\mathbb{L}^2}E^j\|_{\mathbb{L}^2}^2\Big] + k \sum_{j=0}^J 
{\mathbb E}\bigl[\|\grad( X^j - Y^j) \|^2_{{\mathbb L}^{2}}\bigr] + Ch^2 \sup_{0\le j\le J} \mathbb{E}\big[\|X^j\|_{\mathbb{W}^{1,2}}^2\big] \notag \\
& \le C(k+h^2). \notag 
\end{align}
This finishes the proof. 
\end{proof}
\subsection{Proof of Main Theorem}
 Let the assumption \ref{A1} hold and $x\in {\mathbb W}^{2,2}_{{\rm per}}$. Then thanks to Theorem \ref{thm:error-discrete in time}, for every
 $\delta >0$, there exist constants
$0 \leq C_{\delta} < \infty$ and $k_1\equiv k_1(T,x)>0$ such that for all $k\le k_1$ sufficiently small
\begin{align}
\sup_{0 \leq j \leq J} {\mathbb E}\bigl[ \Vert X_{t_j} - X^j\Vert^2_{{\mathbb L}^2}\bigr] + k \sum_{j=0}^J 
{\mathbb E}\bigl[\Vert \grad(X_{t_j} - X^j) \Vert^2_{{\mathbb L}^{2}}\bigr] \leq C_{\delta} k^{1-\delta}, \label{esti:final-1}
\end{align}
where $\{X_t;\, t \in [0,T] \}$ solves \eqref{eq:variational-Allen-Cahn} while $\{ X^j;\, 0 \leq j \leq J\}$ solves \eqref{eq:time-discrete}. Again, Theorem \ref{thm:error-estime} asserts that
there exist constants $C>0$, independent of the discretization parameters $h,k >0$ and $k_2\equiv k_2(T,x)>0$ such that for all $k\le k_2$ sufficiently small
\begin{align}
\sup_{0 \leq j \leq J} {\mathbb E}\Bigl[ \| X^j - Y^j\|^2_{{\mathbb L}^2}\Bigr] + k \sum_{j=0}^J 
{\mathbb E}\Bigl[\|\grad( X^j - Y^j) \|^2_{{\mathbb L}^{2}}\Bigr] \leq C\big(k+h^2\big). \label{esti:final-2}
\end{align}
Let $k_0= \min \{ k_1, k_2\}$. Then \eqref{esti:final-1} and \eqref{esti:final-2} hold true for all $k\le k_0$ sufficiently small. We combine \eqref{esti:final-1} and \eqref{esti:final-2}
to conclude the proof of the main theorem. 

\end{document}